\documentclass[12pt,a4paper]{amsart}%

\usepackage{imakeidx}
\usepackage{amsmath}
\usepackage{fullpage}
\usepackage{comment}
\usepackage{MyCommA}
\usepackage[final]{pdfpages}
\usepackage{tikz-cd}

\makeindex

\newcommand{\RamiA}[1]{{{#1}}}
\newcommand{\RamiB}[1]{{{#1}}}
\newcommand{\Eitan}[1]{{{#1}}}
\newcommand{\Dima}[1]{{{#1}}}
\newcommand{\Nir}[1]{{{#1}}}

\newcommand{\NextVer}[1]{}

\newcommand{\noleft}{\left.\kern-\nulldelimiterspace}

\DeclareMathOperator{\trace}{tr}

\DeclareMathOperator{\Gal}{Gal}
\newcommand{\hgt}{\mathfrak{H}}

\title{Push forward\Dima{s} of smooth measures on the unitary group in odd characteristic under the Chevalley map are almost bounded}

\author[Aizenbud]{Avraham Aizenbud}
\address{Avraham Aizenbud,
Faculty of Mathematical Sciences,
Weizmann Institute of Science,
76100 Rehovot, Israel}
\email{aizenr@gmail.com}
\urladdr{https://www.wisdom.weizmann.ac.il/~aizenr/}

\author[Avni]{Nir Avni}
\address{Nir Avni,
Department of Mathematics, 
Northwestern University,
2033 N. Sheridan Rd,
Evanston IL, 60201}
\email{avni.nir@gmail.com}

\author[Gourevitch]{Dmitry Gourevitch}
\address{Dmitry Gourevitch,
Faculty of Mathematical Sciences,
Weizmann Institute of Science,
76100 Rehovot, Israel}
\email{dimagur@weizmann.ac.il}
\urladdr{https://www.wisdom.weizmann.ac.il/~dimagur/}

\author[Kazhdan]{David Kazhdan}
\address{David Kazhdan,
Einstein Institute of Mathematics,
Edmond J. Safra Campus, Givaat Ram,
The Hebrew University of Jerusalem,
Jerusalem 91904, Israel}
\email{david.kazhdan@mail.huji.ac.il}
\urladdr{https://math.huji.ac.il/~kazhdan/}

\author[Sayag]{Eitan Sayag}
\address{Eitan Sayag,
Department of Mathematics,
Ben Gurion University of the Negev,
P.O.B. 653,
Be'er Sheva 84105, Israel}
\email{eitan.sayag@gmail.com}
\urladdr{https://www.math.bgu.ac.il/~sayage}
\date{\today}

\begin{document}

\maketitle
\begin{abstract}
    In this paper we study analytic properties of the Chevalley map 
    $p:\fu \to \fc$ where $\fu$ is the Lie algebra of a unitary group over a local field of odd characteristics. This map essentially sends a matrix to its characteristic polynomial. 
 Our main result is that smooth compactly supported measures on $\fu$ are pushed by the Chevalley map to measures on $\fc$ with density functions in $L^q(\fc)$ for all $q < \infty$. We also provide similar results for the Chevalley map attached to the unitary group.
    
    An important consequence is an odd characteristic analogue to results of Harish-Chandra stating the local integrability of the orbital integrals on the \Dima{unitary group and its Lie algebra.}
    Further possible applications to local integrability of characters of admissible representations of unitary groups over local fields of odd characteristic are also discussed. 
\end{abstract}

\tableofcontents

\section{Introduction}
\subsection{Main results}
    Let $\bfU$ be a unitary group defined over a local field $F:=\F_\ell((t))$ of odd characteristic and let $\uu$ be its Lie algebra. Let $p:\uu\to \uc:=\uu//\bfU$ be the Chevalley map (see more details in \Cref{lem:qust.lie} below). Let $\fu:=\uu(F)$ and $\fc:=\uc(F)$. Let $\mu_{\fu}$ and $\mu_{\fc}$ be Haar measures on $\fu$ and $\fc$.
    In this paper we prove:
\begin{introtheorem}[\S\ref{sec:key.ingr}]\label{thm:main}
    For every $f\in C_c^\infty(\fu),$    
    we have $$\frac{p_*(f \mu_{\fu})}{\mu_{\fc}}\in \bigcap_{q\in (1,\infty)} L^q({\fc}).$$ 
\end{introtheorem}
We also deduce the following group version of this theorem.
Let $\cP:\bfU\to \bfC:=\bfU//\bfU$ be the group Chevalley map (see \S\ref{ssec:cat.qu} for more details). Let $U:=\bfU(F)$ and $C:=\bfC(F)$.
\begin{introtheorem}[\S\ref{sec:grp.case}]\label{thm:main.grp}
Let  $\mu_U$ be a Haar measure on $U$ and $\mu_C$ be a smooth nowhere vanishing measure on $C$ (see \Cref{def:FRS} for more details). 
    For every $f\in C_c^\infty(U),$    
    we have $$\frac{\cP_*(f \mu_{U})}{\mu_{C}}\in \bigcap_{q\in (1,\infty)} L_{loc}^q(C).$$
\end{introtheorem}
\subsubsection{Relation with orbital integrals}
In view of the results of \cite{DKS,Bes} one can deduce from Theorems \ref{thm:main}  and \ref{thm:main.grp} the local integrability of orbital integrals. Let us briefly remind what they are.
\Nir{
\begin{notation}
Fix a Haar measure $\mu_U$ on $U$.
\begin{enumerate} 
\item For a regular semisimple element $x$ in either $U$ or $\fu$, let $U_x \subseteq U$ be the stabilizer of $x$ and let $\mu_x$ be the Haar measure on $U_x$ normalized so that its maximal compact has measure $1$.
\item Let $f$ be in $C_c^\infty(U)$  or $C_c^\infty(\fu)$. If $x$ is a regular semisimple element as above, define $$\mdef{\Omega(f)}(x)=\int_{U/U_x} f(g x g^{-1})dg$$ where $dg$ is the measure induced by $\mu_U$ and $\mu_x$.
\end{enumerate}
$\Omega(f)$ is an almost everywhere defined function on $U$ or $\fu$.
\end{notation}
}

\begin{introcorollary}\label{thm:orb}
$ $
\begin{enumerate}
    \item     Let $f\in C_c^\infty(U)$. Then $\Omega(f)\in L^{1+\eps}(U)$ for some $\eps>0$.
\item 
    Let $f\in C_c^\infty(\fu)$. Then $\Omega(f)\in L^{1+\eps}(\fu)$ for some $\eps>0$.
\end{enumerate}
\end{introcorollary}
\RamiB{
\subsubsection{Status of the results}
The results of this paper depend on the drafts \cite{Bes,DKS}. We expect these drafts to appear on the arXiv in the next few months.
}
\subsection{Background}
\subsubsection{Related results}
A stronger form of the characteristic $0$ counterpart of \Cref{thm:orb} for arbitrary connected reductive groups was proved by Harish-Chandra (cf. \cite[Theorem 13 and Part VII]{HC_VD}) by different methods. The variant of  \Cref{thm:main} with $\bfU$ replaced by $\GL_n$  for arbitrary \Nir{characteristic} was proved in \cite{AAGKS}. We refer the reader to \cite[\S 1.3]{AGKS_jets}, \cite[\S 1.2]{AAGKS} for further discussions of the background of this problem.
\subsubsection{Motivation}
For us, the main motivation for \Cref{thm:main.grp} is establishing an analog of Harish-Chandra's integrability theorem \cite{HC_int} in positive characteristic. Harish-Chandra's integrability theorem states that the 
character of every irreducible representation of a p-adic group is 
represented
by a locally $L^1$ function. Our aim is to extend this theorem for groups defined over local fields of positive characteristic. In \cite{AGKS_HC,AAGKS} we did this for cuspidal representations of $\GL_n(F)$. 

The proofs in \cite{HC_int,AGKS_HC,AAGKS} depend on (some versions of) the relevant counterparts of Theorems \ref{thm:main}, \ref{thm:main.grp}, \ref{thm:orb}. So we consider the current work as a step towards Harsh-Chandra's integrability theorem for unitary groups in odd characteristic.

As mentioned in \cite{Bes}, in order to deduce the Harsh-Chandra's integrability for cuspidal representations of the group $U$ from \Cref{thm:orb} it would be sufficient to establish an analog of \cite{AGKS_Orb} for the group $\bfU$. However, rather than doing this we plan (in the works in progress \cite{AGKS_eff,AGKS_int}) to deduce local integrability of characters of arbitrary  representations of $U$ from \Cref{thm:orb}.


\subsection{Ideas of the proofs}
\subsubsection{\Cref{thm:main}}
{For a unitary Lie algebra $\fu$ as above we introduce a measurable function $\kappa_{\fu}:\fu \to \C$ (see \S \ref{ssec:kap}).}
The proof of \Cref{thm:main}
is by induction. It 
consists of 2 main steps:
\begin{enumerate}
    \item If \Cref{thm:main} holds, then 
    $\kappa_{\fu}$ is locally integrable. See \Cref{thm:chv.kappa} below.
    \item If \Nir{the function} 
    $\kappa_{\fu'}:\fu' \to \C$, for a certain unitary Lie algebra $\fu'$ of smaller rank, is locally integrable then \Cref{thm:main} holds (for $\fu$). See \Cref{thm:kappa.chv} below.
\end{enumerate}

Step (1) is done in \cite{Bes}. The main part of the proof is Step (2). For this, we decompose $\fu$ to a direct sum of 3 subspaces $\fu'\oplus W\oplus F$ ($\fu'$ is the unitary Lie algebra of a co-dimension 1 subspace) such that the map $p$ is quadratic on $W$. \Nir{We show that, for a generic element $A\in \fu'$, the restriction of $p$ to $\left\{ A \right\} \oplus W\oplus F\to \fc$ }pushes a smooth compactly supported measure to a measure with $L^q$-density function for any $q\in [1,\infty)$. Moreover, we bound the $L^q$-norm of this density in terms of {$\kappa_{\fu'}(A)$} \Nir{and the norm of the restriction}.

\subsubsection{Comparison with the case of $\GL_n$}
The proof is somewhat analogous to the proof in \cite{AAGKS} of the case of $\fg\fl_n$. In 
\cite{AAGKS} we also have a similar decomposition of $\fg\fl_n$. However, while here the map $p$ on $W$ is quadratic (and $\dim W$ is approximately twice the dimension of $\fc$), in the $\fg\fl_n$-case the Chevalley map on the analogous space is linear (and it is of the same dimension as $\fc$). This leads to a simpler estimate on the relevant $L^q$ norm, and thus we can show directly that this estimate is locally integrable and we do not need the inductive procedure.

So, while in the $\fg\fl_n$-case, the analog of \Cref{thm:main} is a step preceding the proof of the local integrability of $\kappa$ (which then can be used for the local integrability of orbital integrals and then for characters of cuspidal representations), here \Cref{thm:main} is proven by induction together with the local integrability of $\kappa$.
\subsubsection{\Cref{thm:main.grp}}
\Cref{thm:main.grp} is deduced from \Cref{thm:main} by a standard technique based on the Cayley transform. See \S \ref{Subsec-proof-of-thmB}. 

\subsection{Applicability for other groups}
We expect the argument in the current paper to be applicable to orthogonal groups. However, it is not applicable (at least in the current form) to the symplectic group. 

Therefore we plan to proceed using  a different strategy: we would like to use the original Harish-Chandra's argument to deduce from the unitary case (together with the $\fg\fl_n$ case) the counterpart of \Cref{thm:main} for many other groups including all classical groups in odd characteristic. 

\Eitan{Let us explain why we expect this strategy to work.}  
The main point where the classical Harish-Chandra's argument \Nir{breaks in positive characteristic is that the addition map
\[
\cI:Z(\ug) \times \mathcal{N}_\ug \rightarrow \ug
\]
from the product of the center and the nilpotent cone of the Lie algebra to the Lie algebra is not a closed immersion.

This phenomenon is rare  outside type A. We expect Harish-Chandra's argument to show that if $\cI$ is an immersion and \Cref{thm:main} is valid for centralizer of non-central semi-simple elements in $\ug$ then \Cref{thm:main} holds for $\ug$ (at least in good characteristic).

This should solve the problem for  the symplectic group  since in this case, these centralizers are products of smaller symplectic groups and unitary group. 

We expect such an argument to work for \Dima{a} large family of groups.
 We will work on it in \cite{AAGKS_cl}.
}

\subsection{Structure of the paper}
In \S\ref{sec:settings} we fix our notation, recall the basic facts on the unitary group and on its Chevalley map, and define the function $\kappa$.

In \S\ref{sec:key.ingr} we formulate the two ingredients of the proof of \Cref{thm:main}, namely \Cref{thm:chv.kappa} and \Cref{thm:kappa.chv}, and deduce \Cref{thm:main} from them by induction on the dimension of $V$.

In \S\ref{sec:kottwitz} we recall the Kottwitz height of a quasi-affine variety and record the properties of it that we need, in particular its local integrability.

In \S\ref{sec:Lr.bound} we introduce, for regular semi-simple $A\in \fu$, a map $\phi_A$ and prove \Cref{thm:Lr-bound}: the map $\phi_A$ pushes a smooth compactly supported measure to a measure with $L^{<\infty}$ density, and the $L^r$-norm of this density is bounded by $\kappa(A)$ times a locally integrable function. This is the main ingredient in the proof of \Cref{thm:kappa.chv}.

In \S\ref{sec:pf.kappa.chv} we prove \Cref{thm:kappa.chv}. We decompose $\fu \cong \fu_W\oplus W \oplus F$, describe the Chevalley map in terms of the maps $\phi_A$ (\Cref{p and phi}), and deduce the statement from \Cref{thm:Lr-bound} using the Minkowski integral inequality.

Finally, in \S\ref{sec:grp.case} we deduce \Cref{thm:main.grp} from \Cref{thm:main} using the Cayley transform.
\subsection{Acknowledgments}
During the preparation of this paper, A.A., D.G. and E.S. were partially supported by the ISF grant no. 1781/23, A.A. and N.A. were partially supported by BSF grant no. 2022193, N.A. was partially supported by NSF grant no. 2503233, and D.K. was partially supported by an ERC grant 101142781.
\section{Settings}\label{sec:settings}
\subsection{Conventions}\label{Sec.Conv.}

Throughout the paper:

\begin{itemize}

\item \tdef{$\ell$} is an odd prime power, $\mdef{F}:=\F_\ell((t))$, $\mdef{O_F}:=\F_\ell[[t]]$.
\item Unless stated otherwise, the \tdef{algebraic varieties} that we consider are defined over $F$.
{
\item We will use two conventions to name an algebraic variety and its $F$-rational points. In one, an algebraic variety is denoted by a boldface letter (e.g. $\bfX$) and the set of $F$ rational points is denoted by the corresponding unbolded letter (e.g. $X$). In the other, an algebraic variety is denoted by an underlined gothic letter (e.g. $\uu$) and the set of $F$ rational points is denoted by removing the underline (e.g. $\fu$).
}
\item We use the same notation for algebraic maps and the induced maps on $F$-points.

\item For an algebraic variety $\mathbf X$ over $F$, we consider the collection of its $F$-points as an $l$-space (in the sense of \cite{BZ}). If the variety is smooth we also consider this collection as an $F$-analytic  manifold.
\item For an $l$-space $X$,
we denote by \tdef{$C_c^\infty(X)$} the space of locally constant compactly supported functions on $X$ and by \tdef{$C^\infty(X)$} the space of locally constant functions on $X$.
\item For any finite dimensional $F$-vector $W$ we choose a Haar measure \tdef{$\mu_W$} on it.
\item For a {continuous function $\phi:X\to Y$ of topological spaces} and a compactly supported {(Radon)} measure $\mu$ on $X$, we denote by \tdef{$\phi_*(\mu)$} the pushforward measure on $Y$.

\item For $q\in (0,\infty]$,
by an \tdef{$L_{{loc}}^q$}, or \tdef{$L^q$} function on an $F$-vector space we mean a function which is $L_{{loc}}^q$ or $L^q$ with respect to $\mu_W$. We will use these notions also for functions that are only defined almost everywhere.
\item Also denote $$\mdef{L^{<q}}=\bigcap_{r< q}L^{r}; \quad \mdef{\mdef{L_{loc}^{<q}}}=\bigcap_{r< q}L_{loc}^{r}; \quad  \mdef{L^{>q}}=\bigcup_{r> q}L^{r}; \quad   \mdef{L_{loc}^{>q}}=\bigcup_{r> q}L_{loc}^{r}$$

\end{itemize}

\subsection{Unitary group}\label{ssec:prel.uni}
Fix an etale algebra  \tdef{$E/F$} of degree 2. $E$ is either a quadratic field extension or the algebra $F\times F$. We write $x\mapsto \mdef{\overline{x}}$ for the nontrivial Galois involution of $E/F$. Choose a {non-zero} element
$
\mdef{\xi}\in E,
$
such that
$
\overline{\xi}=-\xi.
$

Let $\mdef{\bfV}$ be restriction of scalars of $\bA^n$ from $E$ to $F$. In particular
 $\mdef{V}:=\bfV(F)=E^n$.  Let $$
\mdef{B}:V\times V\longrightarrow E
$$
be a non-degenerate Hermitian form on $V$ with respect to the extension  $E/F$.
The unitary group of $V$ is the $F$-algebraic group:
$$
\mdef{\bfU}:=\mdef{\bfU_V}
:=
\left\{
g\in \GL_E(\bfV)|
B(gv,gw)= B(v,w)
\text{ for all }v,w\in V
\right\}.
$$
Its 
Lie algebra is 
$$
\mdef{\uu}:=\mdef{\uu_V}
:=
\left\{
X\in \End_E(\bfV)|
B(Xv,w)+B(v,Xw)=0
\text{ for all }v,w\in V
\right\}.
$$

\Dima{
\begin{lemma} \label{unitary if F x F} Let $\overline{V}$ be the conjugate vector space of $V$, i.e., the additive group $V$ where the multiplication of $v$ by $\alpha \in E$ is $\overline{\alpha}v$. 
\begin{enumerate}
    \item \label{it:Biso} The non-degenerate Hermitian form $B$ gives rise to an isomorphism between $V$ and the dual space of $\overline{V}$.
    \item \label{it:BSplit} If $E=F \times F$ then this isomorphism gives an isometry between $(V,B)$ and $( F^n \times (F^n)^* , B_s)$, where $B_s((v,v^*),(u,u^*))=(v^*(u),u^*(v))\in E$. Under this isometry, 
    $$\bfU=\left\{ (g,(g ^{-1})^T) \mid g\in \GL_n \right\} \text{ and }\fu= \left\{ (X,-X^T) \mid X\in \mathfrak{gl}_n \right\}.$$ 
\end{enumerate}
 \end{lemma}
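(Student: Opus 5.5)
For part (1) we define $\Phi: V\to \overline{V}^*$ by $\Phi(v)(w):=B(w,v)$. First we check that $\Phi(v)$ is $E$-linear as a functional on $\overline{V}$. Since $B$ is sesquilinear, say linear in the first variable and conjugate-linear in the second, we have $B(\overline{\alpha}\cdot w, v)=\overline{\alpha}B(w,v)$. Here $\overline{\alpha}\cdot w$ is the ordinary product in $V$, and it equals $\alpha$ acting on $w\in\overline{V}$. So we twist so that $\Phi(v)$ is $E$-linear, using $\overline{B(w,v)}$ if needed, and arrange that $\Phi$ is $E$-linear in $v$. Injectivity follows from non-degeneracy. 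Both sides are free $E$-modules of rank $n$, so $\Phi$ is an isomorphism. When $E=F\times F$ is not a field, we compare $F$-dimensions, or argue componentwise.

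For part (2), write $E=F\times F$ with idempotents $e_1=(1,0)$ and $e_2=(0,1)$. The involution swaps the two factors. Then $V=e_1V\oplus e_2V=:L\oplus L'$, with $L\cong L'\cong F^n$.

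1. Sesquilinearity gives $B(e_iv,e_jw)=e_i\overline{e_j}B(v,w)$. This vanishes when $i=j$, since $e_1\overline{e_1}=e_1e_2=0$. Hence $L$ and $L'$ are isotropic, and $B(v,w)\in E$ is determined by the pairings between $L$ and $L'$.

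2. Set $\beta(x,y):=e_1$-component of $B(x,y)$ for $x\in L$, $y\in L'$. This is an $F$-bilinear pairing $L\times L'\to F$. It is non-degenerate by part (1), so it identifies $L'\cong L^*$.

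3. Hermitian symmetry $B(w,v)=\overline{B(v,w)}$ determines the $e_2$-components from $\beta$. Writing $v=(x,x^*)$ and $u=(y,y^*)$, one checks that $B(v,u)=(x^*(y),y^*(x))$ up to the ordering convention. This is $B_s$.

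4. An $E$-linear $g$ preserves $L$ and $L'$, so $g=(g_1,g_2)$. Preserving $B_s$ means $g_2^*(y^*)(g_1x)=y^*(x)$, that is, $g_2=(g_1^{-1})^T$. Differentiating, or imposing $B(Xv,w)+B(v,Xw)=0$ directly, gives $X=(X_1,-X_1^T)$. These identities hold functorially in $F$-algebras, so they describe $\bfU$ and $\uu$ as algebraic groups and Lie algebras.

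The only real obstacle is bookkeeping: matching the conjugation conventions so that $\Phi$ is $E$-linear and $B_s$ appears in exactly the stated order of components. This is a sign and convention check, not a conceptual difficulty.
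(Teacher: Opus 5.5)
Your proposal is correct and follows the paper's route: the paper treats (1) as clear and proves (2) by exactly your step 4, evaluating $B_s$ on $(gv,0)$ and $(0,hu^*)$ to get $h^Tg=I$ and then reading off $\fu$. Your idempotent splitting into the isotropic subspaces $e_1V$, $e_2V$ just makes explicit the isometry that the paper takes for granted. The convention issue you flag is harmless: $B_s$ as written, with $E=F\times F$ acting factorwise, is conjugate-linear in the first variable. With that convention your original $\Phi(v)(w)=B(w,v)$ is already $E$-linear on $\overline{V}$, and your computation, pairing $e_1V$ with $e_2V$ via the $e_2$-component, returns $B_s$ with exactly the stated order of components.
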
 
\begin{proof}
\eqref{it:Biso} is clear. For \eqref{it:BSplit}, note that {if $(g,h)\in \bfU_{B_s}$ then for every $v\in V$ and $u^*\in V^*$ we have
\begin{multline*}  
(0,(hu^*)(gv))=B_s((gv,0),(0,hu^*))=B_s((g,h)(v,0),(g,h)(0,u^*))\\=B_s((v,0),(0,u^*))=(0,u^*(v))
\end{multline*}
thus $
u^*(h^Tgv)=(hu^*)(gv)=u^*(v)$ so $h^Tg=I$.}
The result for $\fu$ follows immediately.
\end{proof}
}

Let
$$
\mdef{\uu^{rss}}:=\mdef{\uu(V)^{rss}}\subset \uu(V),
$$ denote the regular semisimple locus of $\uu$
and let $\mdef{\Delta}:=\mdef{\Delta_{V}}\in O_\uu(\uu)$ be the Weyl discriminant (See \cite[7.2]{Kot}).

\subsection{Invariants and the Chevalley map}\label{subsec: chev.map}
In the following we recall the ring of invariants for the adjoint action of the unitary group on its Lie algebra, fix generators for this ring and use this to explicate the Chevalley map  $p:\uu\to \uc:=\uu//\bfU$ in these coordinates. 

\begin{notation}
    For $A\in \fu$ let $\mdef{\chi_A}=x^n +\sum_{i=0}^{n-1}  \mdef[c_{n}(A)]{c_{n-i}(A)}x^i$ be the   characteristic polynomial of $A$. For $1 \leq i \leq n,$ let $\mdef{ b_i(A)}:=\frac{c_i(A)}{\xi^i}$. This defines polynomials $b_i\in O_\uu(\uu)^{\bfU}$ (with coefficients in $F$).
\end{notation}
The following lemma is standard. For completeness, we include its proof here.
\begin{lem}\label{lem:qust.lie}
    The algebra $O_\uu(\uu)^{\bfU}$ of invariants is  a polynomial algebra in the elements $(b_i)_{i=1}^{n}.$
\end{lem}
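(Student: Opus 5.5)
The plan is to reduce to the split case by base change. Both sides of the statement are compatible with flat base change: for a reductive group acting on an affine variety, forming invariants commutes with flat extension of scalars (Reynolds operator in characteristic zero, or in general the invariants are the kernel of $f\mapsto \mathrm{coact}(f)-f\otimes 1$, and flat base change preserves kernels). So it suffices to prove that $O(\uu_{\bar F})^{\bfU_{\bar F}}=\bar F[b_1,\dots,b_n]$, and that the $b_i$ are already defined over $F$. The latter holds because for $A\in\fu$ the relation $B(Av,w)=-B(v,Aw)$ shows that the adjoint of $A$ with respect to $B$ is $-A$. Hence $\overline{\chi_A}(x)=\chi_{-A}(x)$, which gives $\overline{c_i(A)}=(-1)^ic_i(A)$. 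Since $\overline{\xi^i}=(-1)^i\xi^i$, we get $\overline{b_i(A)}=b_i(A)$. Viewing $\uu$ as an $F$-scheme, the $b_i$ are therefore regular functions with coefficients in $F$, and they are clearly $\bfU$-invariant.

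Over $\bar F$, and in fact already over $E$ if $E$ is a field, $\bfU$ becomes $\GL_n$. One can see this directly from \Cref{unitary if F x F}, since $E\otimes_F E\cong E\times E$ and so after extending to $E$ we are in the split case. The isomorphism identifies $\uu$ with $\mathfrak{gl}_n$ via $(X,-X^T)\mapsto X$, and the adjoint action with conjugation. The $E$-linear structure on $V$ becomes the first factor. Under this identification $c_i(A)$ corresponds to the $i$-th coefficient of the characteristic polynomial of $X$, up to the convention for which factor is used, and any sign changes from $X\mapsto -X^T$ are harmless. Thus $b_i$ corresponds to $\xi^{-i}c_i(X)$, and $\xi$ is a unit after extension because $\xi\neq 0$ and $\bar\xi=-\xi$. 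It then suffices to invoke the classical Chevalley restriction theorem for $\mathfrak{gl}_n$, namely $O(\mathfrak{gl}_n)^{\GL_n}=k[c_1,\dots,c_n]$. This holds over any field $k$, including positive characteristic.

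For completeness one would sketch the $\mathfrak{gl}_n$ statement. Restricting to the diagonal torus gives an injection $O(\mathfrak{gl}_n)^{\GL_n}\to k[t_1,\dots,t_n]^{S_n}$, because diagonalizable matrices are dense. The target is generated by the elementary symmetric polynomials, which are the images of the $c_i$. This gives surjectivity onto the $c_i$-algebra, and the $c_i$ are algebraically independent since the elementary symmetric polynomials are.

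The main subtlety I expect is bookkeeping rather than a conceptual step. One has to be careful that "$O_\uu(\uu)$" means functions on the $F$-scheme $\uu$ (a restriction of scalars), so the base change to identify with $\mathfrak{gl}_n$ should be done via $\uu_E\cong\mathfrak{gl}_{n,E}$. One also has to check that the $F$-structure is recovered correctly, i.e., that $F$-invariants are exactly the Galois-fixed elements of $E[b_1,\dots,b_n]$, and these equal $F[b_1,\dots,b_n]$ since the $b_i$ are fixed. The split case $E=F\times F$ needs no base change at all.
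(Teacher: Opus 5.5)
Your proposal is correct and follows the paper's route: extend scalars to $E$ (using $E\otimes_F E\cong E\times E$) to reduce to the split case, where the unitary Lie algebra with its adjoint action becomes $\mathfrak{gl}_n$ with conjugation, and then apply the classical Chevalley theorem for $\mathfrak{gl}_n$. Beyond the paper, you make explicit two points it leaves implicit: that invariants commute with flat base change, so the map $F[x_1,\dots,x_n]\to O_\uu(\uu)^{\bfU}$ can be checked to be an isomorphism after $\otimes_F E$, and that the $b_i$ are indeed defined over $F$.
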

\begin{proof}
$ $
\begin{enumerate}[{Case} 1:]
    \item $E\cong F\times F$.\\
    In this case the statement becomes the analogous statement for $\GL_n$ which is well known, see e.g. \cite[Theorem 1.3]{DeCo-Proc}. 
    \item 
    Consider the map $$F[x_1,\dots,x_n]\to O_\uu(\uu)^{\bfU}$$ sending $x_i$ to $b_i$. We need to show that it is an isomorphism. For this it is enough to show that the corresponding map     $$F[x_1,\dots,x_n]\otimes_F E \to O_\uu(\uu)^{\bfU}\otimes_F E$$
    is an isomorphism. This follows from the previous step as $E\otimes_F E\cong E\times  E$. 
\end{enumerate}
   
\end{proof}

We move on to describe the Chevalley map for the Lie algebra of the unitary group. 
Denote $\mdef{\uc}:=\mdef{\uc_V}:=\uu//\bfU:=\Spec(O_\uu(\uu)^{\bfU})$ and identify it with $\bA^n$ via the polynomials $b_i$. Denote by $\mdef{p}:=\mdef{p_V}:\uu\to \uc$ the quotient map. This is the Chevalley map of the Lie algebra $\uu$.

Let $\mdef{\fu}:=\mdef{\fu_V}:=\uu(F)$,  $\mdef{\fu^{rss}}:=\mdef{\fu^{rss}_V}:=\uu^{rss}(F)$,  $\mdef{U}:=\mdef{U_V}:=\bfU(F)$ and $\mdef{\fc}:=\mdef{\fc_V}:=\uc(F)$. 

{
\subsection{The function $\kappa$}\label{ssec:kap}

{
\begin{lemma}[{\cite[\S4.1]{AGKS_HC}}]\label{lem:kaz.form}
 Let $\mathbf{T}$ be a $d$-dimensional torus defined over $F$. 
 
 \begin{enumerate}
     \item 

 There is a separable field extension $L/F$ and an isomorphism $$\tau:\mathbf{T} \times \Spec(L) \rightarrow \mathbb{G}_m^d \times \Spec(L).$$
 
 \item 
 The top form $\omega_\tau=\tau ^* \left( \frac{d t_1}{t_1} \wedge \cdots \wedge \frac{d t_d}{t_d} \right)\in \Omega^d \left( \mathbf{T} \times \Spec(L) \right)$ depends on $\tau$ but only up to a sign. 
 
 \item \label{lem: kaz-section} The section  $\omega_\tau^{\otimes 2}$ of the bicanonical bundle is independent of $\tau$ and thus descends to a section of the bicanonical bundle on $\mathbf{T}$. 
 \end{enumerate}
\end{lemma}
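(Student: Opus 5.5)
We prove the three parts in order; the key input is the structure of automorphisms of split tori.

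For (1), recall that an $F$-torus $\mathbf{T}$ is by definition a group scheme that becomes isomorphic to $\mathbb{G}_m^d$ after a base change to the separable closure $F^{sep}$. The character group $X^*(\mathbf{T})$ is then a finitely generated free abelian group with a continuous action of $\Gal(F^{sep}/F)$. Continuity means this action factors through a finite quotient $\Gal(L/F)$, for some finite Galois extension $L/F$ inside $F^{sep}$. Every character is defined over $L$, so choosing a $\Z$-basis $\chi_1,\dots,\chi_d$ of $X^*(\mathbf{T})$ gives a map $\tau=(\chi_1,\dots,\chi_d):\mathbf{T}_L\to \mathbb{G}_{m,L}^d$. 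This is an isomorphism because it is one over $F^{sep}$, and being an isomorphism descends along the faithfully flat map $\Spec F^{sep}\to \Spec L$. Separability of $L$ is automatic, since $L\subset F^{sep}$. This step is standard.

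For (2), any two trivializations $\tau,\tau'$ over $L$ differ by an automorphism $\sigma=\tau'\circ\tau^{-1}$ of $\mathbb{G}_{m,L}^d$ as a group. Here one may need to enlarge $L$ to a common field, and pullback of forms is compatible with such a base change. Over the connected base $\Spec L$, group automorphisms of $\mathbb{G}_m^d$ are given by matrices $M=(m_{ij})\in \GL_d(\Z)$, via $t_i\mapsto \prod_j t_j^{m_{ij}}$. Pulling back gives
\[
\sigma^*\tfrac{dt_i}{t_i}=\sum_j m_{ij}\tfrac{dt_j}{t_j},
\]
so
\[
\sigma^*\Bigl(\tfrac{dt_1}{t_1}\wedge\cdots\wedge\tfrac{dt_d}{t_d}\Bigr)=\det(M)\,\tfrac{dt_1}{t_1}\wedge\cdots\wedge\tfrac{dt_d}{t_d},
\]
and $\det M=\pm1$. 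Hence $\omega_{\tau'}=\pm\omega_\tau$.

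For (3), part (2) immediately gives that $\omega_\tau^{\otimes 2}$ is independent of $\tau$, including under change of the field $L$, since pullback commutes with base change. Descent is the remaining point. For $g\in\Gal(L/F)$, the conjugate trivialization ${}^g\tau$ is again a trivialization of $\mathbf{T}_L$. Its form satisfies
\[
\omega_{{}^g\tau}={}^g(\omega_\tau),
\]
because the form $\frac{dt_1}{t_1}\wedge\cdots\wedge\frac{dt_d}{t_d}$ on $\mathbb{G}_m^d$ is defined over $\Z$. Independence of the trivialization then shows that $\omega_\tau^{\otimes2}$ is a $\Gal(L/F)$-invariant section of $(\Omega^d)^{\otimes2}$ on $\mathbf{T}_L$. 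Galois descent for quasi-coherent sheaves along the finite Galois extension $L/F$ gives a unique section on $\mathbf{T}$ that pulls back to it.

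The only step needing care is this last one: checking that Galois conjugation acts compatibly on trivializations and on forms, so that invariance genuinely holds. The classification of automorphisms of split tori by $\GL_d(\Z)$ is the key input throughout.
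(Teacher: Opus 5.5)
The paper does not prove this lemma itself; it simply quotes it from \cite[\S4.1]{AGKS_HC}. Your proof is correct and is the standard argument behind that citation: you split the torus over a finite Galois extension, identify the automorphisms of $\mathbb{G}_m^d$ with $\GL_d(\Z)$ so that $\omega_\tau$ changes only by $\det M=\pm1$, and then descend the $\Gal(L/F)$-invariant section $\omega_\tau^{\otimes 2}$ by Galois descent.
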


\begin{definition}\label{Kazhdan form}
Let $\mathbf{T}$ be a $d$-dimensional torus defined over $F$.
We denote the section of the bicanonical bundle of $T$ introduced in \Cref{lem:kaz.form}\eqref{lem: kaz-section} by $\omega_{\mathbf{T}}^{K}$.    
\end{definition}   
}

\begin{definition}\label{def:kap}
For $x\in \fu^{rss}$, let $\bfU_x \subseteq \bfU$ be the stabilizer of $x$ under the adjoint action, let $\omega_{\bfU_x}^{K}$ be the section of the bicanonical bundle on $\bfU_x$ constructed in \Cref{Kazhdan form} and let $\mdef{\vert \omega_{\bfU_x}^{K} \vert^{\frac 12}}$ be the corresponding measure on $U_x$. Denote by $\mdef{(U_x)_0}$ the (unique) maximal compact subgroup of $U_x$ and define
\[
\mdef{\kappa_0(x)}:=\mdef{(\kappa_0)_V(x)}:=\int_{(U_x)_0} |\omega_{\bfU_x}|^{\frac 12}
\]
and 
\[
\mdef{\kappa(x)}:=\kappa_V(x):=\frac{\kappa_0(x)}{|\Delta(x)|^{\frac 12}}.
\]
\end{definition}

}

\section{The key ingredients and Proof of \Cref{thm:main}}\label{sec:key.ingr}
The proof of \Cref{thm:main} is based two ingredients that we  formulate below without proof. We only discuss them shortly. Then we deduce \Cref{thm:main}.  

The first ingredient is the following:

\begin{theorem}[\cite{Bes}]\label{thm:chv.kappa}
    Assume  that for any $f\in C_c^\infty(\fu)$ we have $$\frac{p_*(f\mu_\fu)}{\mu_{\fc}}\in L^{<\infty}(\fc).$$ 
    
    Then $\kappa\in L^{>1}_{loc}(\fu)$.
\end{theorem}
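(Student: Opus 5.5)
The plan is to express $\kappa$ through the orbital-integral (Weyl integration) formula and then use duality between $L^q$ spaces. By construction, $\kappa_0(x)$ is the volume of $(U_x)_0$ with respect to the canonical Kazhdan measure $|\omega^K_{\bfU_x}|^{1/2}$. Hence $\kappa(x)=\kappa_0(x)|\Delta(x)|^{-1/2}$ should be the ratio between two measures on the fiber $p^{-1}(p(x))\cap\fu^{rss}$:
\begin{itemize}
\item the measure obtained by disintegrating $\mu_\fu$ along $p$ against $\mu_\fc$;
\item the measure $dg/d\mu_x$ used to define $\Omega$.
\end{itemize}
The relevant identity is the Weyl integration formula adapted to $p$: for $f\in C_c^\infty(\fu)$,
\[
\frac{p_*(f\mu_\fu)}{\mu_\fc}(c)=\sum_{x\in p^{-1}(c)/U}\kappa(x)\,\Omega(f)(x)
\]
for almost every $c\in\fc$. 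Here the sum runs over the finitely many rational orbits in the stable class $c$, and the normalizing constants are absorbed into the choice of $\mu_U$. To prove this I will compare top forms. Near a regular semisimple $x$ the orbit map $U/U_x\times \fu_x^{rss}\to\fu$ has Jacobian $|\Delta|$. On the torus $\fu_x$, the Chevalley map has Jacobian $|\Delta|^{1/2}$ relative to the Kazhdan form, and this is exactly where the canonical bicanonical section of \Cref{Kazhdan form} is needed. Combining the two Jacobians gives the density $\kappa$. In odd characteristic the relevant tori are split over separable extensions (\Cref{lem:kaz.form}), so the usual characteristic-$0$ computation goes through.

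From this formula I will derive a pointwise bound. Fix a compact open $K\subset\fu$ and take $f=1_{K'}$ for a larger compact open $U$-stable-in-a-neighborhood set $K'$. For $x\in K\cap\fu^{rss}$, the orbital integral satisfies $\Omega(1_{K'})(x)\ge c>0$ uniformly: the orbit of $x$ meets $K'$ in a set containing a fixed neighbourhood of $x$ in the orbit, and since $1_{K'}\ge 0$ the remaining contributions are nonnegative. This gives
\[
\kappa(x)\le C\,h(p(x)),\qquad h:=\frac{p_*(1_{K'}\mu_\fu)}{\mu_\fc},
\]
for $x\in K$. I expect the uniform lower bound on $\Omega$ to be the delicate part. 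It is exactly what the normalization of $\mu_x$ (maximal compact of volume $1$) is designed to give: $\Omega(1_{K'})(x)\ge \mathrm{vol}(\{g: gxg^{-1}\in K'\})\ge \mathrm{vol}(U\cap K_0)$ for a compact open subgroup $K_0$ fixing $K'$.

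It then remains to integrate. For $r>1$,
\[
\int_K\kappa^r\,d\mu_\fu\le C^r\int_K h(p(x))^r\,d\mu_\fu(x)=C^r\int_\fc h^r\cdot\frac{p_*(1_K\mu_\fu)}{\mu_\fc}\,d\mu_\fc .
\]
By hypothesis $h\in L^{q}$ for every $q<\infty$, and so is $p_*(1_K\mu_\fu)/\mu_\fc$. Hölder's inequality then makes the right-hand side finite for every $r$ (in fact every $r<\infty$, which is more than $L^{>1}_{loc}$). The main obstacle is the first step: establishing the Weyl-type disintegration formula with the precise Kazhdan-form normalization in positive characteristic, including the separability issues for the centralizer tori and the Jacobian of $p$ restricted to a Cartan.
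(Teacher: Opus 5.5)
Your disintegration formula has $\kappa$ on the wrong side, and this is not a normalization issue. Combining exactly the two Jacobians you list, the Gelfand--Leray measure of $p$ on the orbit $U\cdot x\cong U/U_x$ is, up to a global constant, $|\Delta(x)|/|\Delta(x)|^{1/2}=|\Delta(x)|^{1/2}$ times the quotient of $\mu_U$ by $|\omega^K_{\bfU_x}|^{1/2}$. Since $\mu_x=\kappa_0(x)^{-1}|\omega^K_{\bfU_x}|^{1/2}$, this equals $\kappa(x)^{-1}\,dg/\mu_x$. The correct identity is therefore
\[
\frac{p_*(f\mu_{\fu})}{\mu_{\fc}}(c)=C\sum_{x\in p^{-1}(c)/U}\kappa(x)^{-1}\,\Omega(f)(x).
\]
A factor $\kappa$, rather than $\kappa^{-1}$, appears when the fibre is a torsor of the torus itself, as for $\phi_A$ in \Cref{pullback of Gelfand Leray}; the orbits $U/U_x$ behave the opposite way. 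Your lower bound $\Omega(1_{K'})(x)\ge\mu_U(K_0)$ is correct. However, inserted into the correct identity it only gives $\kappa^{-1}\le C\,h\circ p$ on $K$. That controls $1/\kappa$ and says nothing about the integrability of $\kappa$.

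The conclusion of your argument shows it cannot be repaired as stated: it would give $\kappa\in L^r_{loc}(\fu)$ for every $r<\infty$, which is false. Take $E=F\times F$ and $n=2$, so $\fu\cong\mathfrak{gl}_2$; there the hypothesis holds by the $\mathfrak{gl}_n$ case mentioned in the introduction. On split regular elements $\kappa_0$ is constant, so $\kappa\asymp|\Delta|^{-1/2}$ there. Now $\Delta=(x_{11}-x_{22})^2+4x_{12}x_{21}$ is a nondegenerate ternary quadratic form. Hence the split elements of $\mathfrak{gl}_2(O_F)$ with $|\Delta|=\ell^{-2k}$ have measure $\asymp\ell^{-2k}$, and $\int_{\mathfrak{gl}_2(O_F)}\kappa^2\,d\mu_{\fu}=\infty$. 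The same example breaks your formula directly. For $x=\mathrm{diag}(a,b)$ one has $\Omega(1_{\mathfrak{gl}_2(O_F)})(x)\asymp|a-b|^{-1}\asymp\kappa(x)$. Your formula would then give the density of $p_*(1_{\mathfrak{gl}_2(O_F)}\mu_{\fu})$ size $|a-b|^{-2}=|\Delta|^{-1}$ on the split part of $\fc$, which is not even integrable.

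The missing ingredient is a genuine \emph{upper} bound on $\kappa$, and lower bounds on orbital integrals cannot supply it. One natural way to use the hypothesis starts from the fact that $\kappa$ is constant on regular fibres of $p$: both $\Delta$ and the $F$-isomorphism class of $\bfU_x$ depend only on $p(x)$. This gives the exact identity
\[
\int_K\kappa^s\,d\mu_{\fu}=\int_{\fc}\kappa^s\,\frac{p_*(1_K\mu_{\fu})}{\mu_{\fc}}\,d\mu_{\fc}.
\]
By H\"older it then suffices to show that $\kappa$, viewed as a function on $\fc$, lies in $L^{>1}_{loc}(\fc)$. That is an arithmetic comparison of $\kappa_0$ with $|\Delta|^{1/2}$, and your proposal never addresses it. The paper itself does not prove this statement; it imports it from \cite{Bes}.
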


The next 3 sections will be dedicated to the proof of {the second ingredient}:

\begin{theorem}[\S\ref{sec:pf.kappa.chv}]\label{thm:kappa.chv}
Let $v\in V$ with $B(v,v)\neq 0$.  Let $W:=v^\bot$.
Assume that $\kappa_{W}\in L^{>1}_{loc}(\fu_W)$.
Then for any $f\in C_c^\infty(\fu)$ we have $$\frac{p_*(f\mu_\fu)}{\mu_{\fc}}\in L^{<\infty}(\fc).$$
\end{theorem}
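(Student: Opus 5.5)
We decompose $V=W\oplus Ev$ orthogonally. Then $\fu=\fu_V$ splits $F$-linearly as $\fu\cong \fu_W\oplus W\oplus F$. An element is written as a block matrix
$$X=\begin{pmatrix} A & w\\ -w^* & a\xi\end{pmatrix},$$
with $A\in\fu_W$, $w\in W$, $a\in F$, and $w^*$ the functional $B(\cdot,w)$ rescaled by $B(v,v)$. By the Schur complement, the characteristic polynomial of $X$ is
$$\chi_X(x)=(x-a\xi)\chi_A(x)+\bigl\langle \mathrm{adj}(x-A)w,w\bigr\rangle .$$
Thus $p(X)$ is affine in $a$ and quadratic (Hermitian) in $w$ once $A$ is fixed.

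For regular semisimple $A$ we diagonalize over the splitting field: $W=\bigoplus_j W_j$ over the eigen-decomposition of $A$, the $W_j$ being the eigenlines grouped by Galois orbits into field extensions $E_j$. The quadratic term then becomes $\sum_j N_j(w_j)\prod_{k\ne j}(x-\lambda_k)$ summed over Galois orbits, where $N_j(w_j)$ are Hermitian norms on the pieces. Define $\phi_A:W\oplus F\to\fc$ by $(w,a)\mapsto p(A,w,a)$.

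Step 1 is to show that $\phi_A$ pushes $C_c^\infty$ measures to $L^{<\infty}$ densities, with explicit norm bounds. We factor $\phi_A$ as a composition: first the map $w\mapsto (N_j(w_j))_j$ into $\prod_j F_j$, where $F_j$ is the fixed field of the involution on $E_j$, together with $a$. Then comes a linear isomorphism onto $\fc$, given by Lagrange interpolation at the eigenvalues of $A$. Its Jacobian is essentially $|\Delta(A)|^{1/2}$ (up to the Vandermonde in the $\lambda_k$), and $\prod_j F_j\times F$ has dimension $n$, matching $\dim\fc$.

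The norm maps $N_j:E_j\to F_j$ (or the splitting product maps) push smooth measures to densities that are locally bounded, or have at worst a logarithmic singularity at $0$ in the split case. Such densities are in $L^{<\infty}$, with constants depending on the size of the support. The crucial point is to control the dependence on $A$. After rescaling coordinates so that the support of the test function matches the lattice determined by the torus $\bfU_A$, the density of the composed push-forward has $L^r$ norm bounded by $C_r\,\kappa_W(A)\,h(A)$. Here $h$ is a Kottwitz-height-type function coming from how far the lattice $\bigoplus_j O_{E_j}$ is from the standard one. I expect this to be the main obstacle: one has to relate the covolume of the maximal compact of the torus, i.e. $\kappa_0$, and the discriminant factor to the Jacobian of the interpolation map, uniformly in $A$. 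I would prove it by reducing to the Kottwitz height of the quasi-affine variety and using its local integrability, which lets $h$ be absorbed into $L^{>1}$-bounds via Hölder.

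Step 2 is to integrate over $A$. For $f\in C_c^\infty(\fu)$, Fubini gives
$$p_*(f\mu_\fu)=\int_{\fu_W}(\phi_A)_*\bigl(f(A,\cdot)\mu_{W\oplus F}\bigr)\,dA .$$
By the Minkowski integral inequality,
$$\Bigl\|\tfrac{p_*(f\mu_\fu)}{\mu_\fc}\Bigr\|_{L^r}\le\int_{\mathrm{supp}}\Bigl\|\tfrac{(\phi_A)_*(f_A\mu)}{\mu_\fc}\Bigr\|_{L^r}\,dA\le C\int_{K}\kappa_W(A)\,h(A)\,dA,$$
where $K$ is a compact subset of $\fu_W$. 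The hypothesis $\kappa_W\in L^{1+\epsilon}_{loc}$, combined with $h\in L^{<\infty}_{loc}$ and Hölder, makes the right-hand side finite for every $r<\infty$. The non-regular-semisimple locus in $\fu_W$ has measure zero and can be ignored. This proves the theorem.
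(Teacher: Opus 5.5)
Your overall architecture matches the paper's. The Schur-complement formula makes $p$ affine in the $F$-coordinate and Hermitian-quadratic in $w$. One then proves a fiberwise bound $\|\cdot\|_{L^r}\le \kappa_W(A)\,h(A)$ with $h\in L^{<\infty}_{loc}(\fu_W)$, and Minkowski plus H\"older against $\kappa_W\in L^{>1}_{loc}$ finish the argument. Those last steps are fine.

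The gap is Step 1. It is exactly \Cref{thm:Lr-bound}, the actual content of the theorem, and you state it (calling it the main obstacle) without proving it. Two separate things are needed there.

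First, the multiplicative factor in the density must be shown to equal $\kappa_W(A)$ up to a constant. This factor combines the Jacobian of the change to eigen-coordinates, the Jacobian of the interpolation map, and the Haar normalizations on $E_j,F_j$ under which your norm-map densities are uniformly bounded. It can be as large as $|\Delta(A)|^{-1/2}$, so it cannot be hidden in a height-type (logarithmic) $h$. It must be matched exactly with $\kappa_0(A)/|\Delta(A)|^{1/2}$, i.e.\ with the volume of the maximal compact subgroup of $U_A$ for the Kazhdan form. Nothing in your sketch computes that volume.

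Second, the residual factor must be bounded by a Kottwitz height of an algebraic variety, uniformly in $(A,c)$. This factor counts how many cosets of that maximal compact the support meets in a fiber, and it produces the logarithms in the split components. Your proposed $h$, ``how far $\bigoplus_j O_{E_j}$ is from the standard lattice,'' is not of that form. The fields $E_j$, their rings of integers and their splitting/ramification type vary non-algebraically with $A$, so \Cref{ht is integrable} does not apply to it.

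The paper avoids the rings $O_{E_j}$ entirely. For $(A,v)$ with $v,Av,\dots,A^{n-1}v$ a basis, the orbit map $a_{A,v}\colon \mathbf{U}_A\to\phi_A^{-1}(\phi_A(v))$, $x\mapsto xv$, is an isomorphism with algebraic inverse given by Cramer's rule (\Cref{a is invertible}). The paper then compares differential forms rather than measures: $(a_{A,v}^*\text{ of the Gelfand--Leray form})^{\otimes 2}=c\,\Delta(A)^{-1}\omega^K_{\mathbf{U}_A}$ (\Cref{pullback of Gelfand Leray}). Your norm-times-Vandermonde factorization appears only here, over a splitting field, as an identity of algebraic forms that descends to $F$.

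Hence $f_A(c)=C\kappa(A)\,\mu^{Ar}_{U_A}(a_{A,v}^{-1}(\Omega))$, with $\kappa_0$ entering by its definition. The arithmetic measure is at most $(1+2\log_\ell R)^n$, by counting cosets of the maximal compact through the $F$-characters $\varphi_{\mathcal O}$ (\Cref{arithmetic measure on tori}). Moreover $\log R\le C\,\hgt_{\mathbf{W}}(A,c)$, because those characters and $a_{A,v}^{-1}$ are algebraic (\Cref{character bounds on tori}, \Cref{a is invertible}). This yields $f_A(c)\le C\kappa(A)\hgt_{\mathbf{W}}(A,c)^C$, and \Cref{ht is integrable} with Fubini gives $\|f_A\|_r/\kappa(A)\in L^{<\infty}_{loc}$. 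Without this torsor-and-height argument, or an equally precise substitute, your Step 1 is not established.
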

\begin{proof}[Idea of the proof of \Cref{thm:kappa.chv}]
We choose an orthogonal $E$-basis on $V$ with $v$ being the last basis vector. Let $n=\dim_E V$. We fix the top-left $(n-1)\times (n-1)$ submatrix $A$ of a matrix in $\fu$ and consider $p$ as a map from the rest of the coefficients to $\fc$. Denote the resulting function by $p_A$. This is a quadratic map. We describe this map in \Cref{p and phi}.
Up to some minor modifications this is the map $\phi_A$ defined in \Cref{not:phiA}. 

We prove in \Cref{thm:Lr-bound} that for a generic $A$ the map $\phi_A$ pushes smooth compactly supported measure to a measure with  $L^{<\infty}$ density. Moreover, we bound (for any $r\in (1,\infty)$) the $L^{r}$ norm of this density in terms of $\kappa_W(A)$.

Finally we present $\frac{p_*(f\mu_\fu)}{\mu_{\fc}}$ as an integral over $A\in \fu_W$ of  the density functions $f_A$ of $(p_A)_*(\mu_A)$ for corresponding smooth compactly supported measures $\mu_A$. By the above, we can bound the $L^r$ norm of $f_A$. We 
use the integral triangle inequality (a.k.a Minkowski inequality) to deduce the result.
\end{proof}
The actual proof will be given in \S\ref{sec:pf.kappa.chv}.  We are now ready to deduce \Cref{thm:main}.

\begin{proof}[Proof of \Cref{thm:main}]
    The proof is by induction on $n=\dim_E(V)$. The base $n=1$ is obvious. So we assume that the statement is true for all quadratic spaces of dimension $n-1$ and prove it for the space $V$. Choose $v\in V$ s.t. $B(v,v)\neq 0$. By the induction hypothesis the statement is true for the quadratic space $W:=v^\bot$. Thus, by \Cref{thm:chv.kappa} the function $\kappa_W$ is in $L_{loc}^{\Dima{>}1}(\fu_W)$.
    By \Cref{thm:kappa.chv} the result follows.
\end{proof}

\section{Kottwitz heights}\label{sec:kottwitz}

For every quasi-affine $F$-variety $\mathbf{X}$ choose generators $f_1^{\mathbf{X}},\ldots,f_m^{\mathbf{X}}\in F[\mathbf{X}]$ and define a function $\hgt_\bfX:\bfX(\overline F) \rightarrow \mathbb{R}_{\geq 0}$ by
\[
\mdef{\hgt_\bfX(x)}=\max \left\{ 1,\log |f_1^\mathbf{X}(x)|,\ldots,\log |f_m^ \mathbf{X}(x)| \right\}.
\]

\begin{lemma} \label{Kottwitz lore}$ $ \begin{enumerate}
\item If $f\in F[\mathbf{X}]$ then there is a constant $C$ such that
\[
\log|f(x)| \leq C \cdot \hgt_\bfX(x),
\]
for every $x\in X$.
\item If $\phi: \mathbf{X} \rightarrow \mathbf{Y}$ is a morphism, then there is a constant $C$ such that 
\[
\hgt_\bfY(\phi(x)) \leq C \cdot \hgt_\bfX(x),
\]
for every $x\in X$.
\item If $g_1^ \mathbf{X},\ldots,g_k^ \mathbf{X}$ is another set of generators for $F[\mathbf{X}]$ and $\hgt'_\bfX$ is the Kottwitz height defined using $g_i^ \mathbf{X}$ then there is a constant $C$ such that 
\[
\frac1C \cdot \hgt_\bfX(x) \leq \hgt'_\bfX(x) \leq C \cdot \hgt_\bfX(x),
\]
for every $x\in X$.
\item If $\phi:\mathbf{X} \rightarrow \mathbf{Y}$ is finite then there is a constant $C$ such that
\[
\frac1C \cdot \hgt_\bfX(x) \leq \hgt_\bfY(\phi(x)) \leq C \cdot \hgt_\bfX(x)
,
\]
for every $x\in X$.
\end{enumerate} 
\end{lemma}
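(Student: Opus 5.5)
The plan is to prove the four items in order, reducing everything to one elementary estimate. For a non-archimedean absolute value, $|a+b|\le\max(|a|,|b|)$ and $|ab|=|a||b|$; for a constant $c$ we have $\log|c|$ bounded. I will use this on $x\in X$ (or on $\bfX(\overline F)$ with the extended absolute value) without change.

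\emph{Item (1).} Since $f_1^{\bfX},\dots,f_m^{\bfX}$ generate $F[\bfX]$ as an $F$-algebra, write
\[
f=\sum_{\alpha} a_\alpha \prod_i (f_i^{\bfX})^{\alpha_i},
\]
a finite sum with $|\alpha|\le D$. By the ultrametric inequality,
\[
\log|f(x)|\le \max_\alpha \Bigl(\log|a_\alpha| +\sum_i \alpha_i\log|f_i^{\bfX}(x)|\Bigr).
\]
When $\log|f_i^{\bfX}(x)|$ is negative we bound that term by $0$. Each term is then at most $\log|a_\alpha| + D\,\hgt_\bfX(x)$. Since $\hgt_\bfX\ge 1$, the constant $\max_\alpha\log|a_\alpha|$ is absorbed, giving $C=D+\max(0,\max_\alpha\log|a_\alpha|)$.

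\emph{Item (2).} Apply (1) to each function $\phi^*f_j^{\bfY}\in F[\bfX]$. This gives $\log|f_j^{\bfY}(\phi(x))|\le C_j\hgt_\bfX(x)$. Take the maximum over $j$, together with the constant $1\le \hgt_\bfX(x)$.

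\emph{Item (3).} Apply (2) to the identity morphism, once in each direction.

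\emph{Item (4).} The upper bound is (2). The lower bound is the only real step, and I will use integrality. Each generator $f_i^{\bfX}$ is integral over $F[\bfY]$, so it satisfies
\[
(f_i^{\bfX})^k+a_1(f_i^{\bfX})^{k-1}+\dots+a_k=0,\qquad a_j\in F[\bfY].
\]
Put $t=|f_i^{\bfX}(x)|$ and suppose $t>1$. The equation forces $t^k\le\max_j|a_j(\phi(x))|\,t^{k-j}$, so $t^j\le|a_j(\phi(x))|$ for some $j\ge 1$. Hence
\[
\log t\le \log|a_j(\phi(x))|\le C_j\,\hgt_\bfY(\phi(x)),
\]
using (1) on $\bfY$. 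If $t\le 1$ the bound is trivial. Maximizing over $i$ yields $\hgt_\bfX(x)\le C\,\hgt_\bfY(\phi(x))$.

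I expect no serious obstacle. The one point needing care is ensuring that the polynomial expressions and integral equations have constant coefficients whose $\log$ is bounded, which holds because they are finitely many fixed elements of $F$. Consequently all constants are uniform in $x$.
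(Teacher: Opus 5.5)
Your proof is correct and follows the same route as the paper. Item (1) expands $f$ as a polynomial in the generators; (2) applies (1) to the pullbacks $\phi^*f_j^{\mathbf{Y}}$; (3) applies (2) to the identity in both directions; and the lower bound in (4) combines the integral equation of each generator over $F[\mathbf{Y}]$ with the ultrametric inequality. You also supply the details the paper leaves implicit, namely absorbing constants via $\hgt\ge 1$ and splitting into the cases $t>1$ and $t\le 1$.
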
 

\begin{proof} $ $ \begin{enumerate}
\item Since $f$ is a polynomial in $f_1^ \mathbf{X},\ldots,f_m^ \mathbf{X}$.
\item Follows from $(1)$.
\item Follows from $(2)$.
\item If $g\in F[\mathbf{X}]$ then there are $h_0,\ldots,h_{n-1}\in F[\mathbf{Y}]$ such that if $x\in X$ then
\[
g(x)^n+h_{n-1}(\phi(x))g(x)^{n-1}+\cdots+h_0(\phi(x))=0.
\]
Thus $|g(x)| \leq \max \left\{ 1,|h_0(\phi(x))|,\ldots,|h_{n-1}(\phi(x))| \right\}$ and, by $(1)$, the first inequality holds. The second inequality is $(2)$.
\end{enumerate} 
\end{proof} 

\begin{lemma}[cf. {\cite[Corollary 3.0.3]{AGKS_jets}}] \label{ht is integrable} Let $\mathbf{X} \subseteq \mathbb{A} ^n$ be Zariski open. Then the function $f:F^n \rightarrow \mathbb{R}_{\geq 0}$ defined by
\[
f(x):=\begin{cases} \hgt_\bfX(x) & x\in X \\ \infty & x\notin X  \end{cases}
\]
is in $L_{loc}^{<\infty}$.
\end{lemma}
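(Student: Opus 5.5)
Since $\mathbf X\subseteq\mathbb A^n$ is Zariski open, its complement is a proper closed subset $Z(g_1,\dots,g_k)$. Any two choices of generators give comparable heights by \Cref{Kottwitz lore}(3), and multiplying by a constant preserves $L^{<\infty}_{loc}$, so I am free to pick convenient generators. I will reduce to a principal open set and then use integrability of $|\log|g||^r$ for a single polynomial $g$.

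\emph{Step 1: reduction to a principal open.} I will cover $\mathbf X$ by the principal opens $D(g_j)$. The sets $\{x\in X : g_j(x)\neq 0\}$ cover $X$, and on such a piece I want to compare $\hgt_\bfX$ with $\hgt_{D(g_j)}$. The inclusion $D(g_j)\to\mathbf X$ is a morphism, so \Cref{Kottwitz lore}(2) gives $\hgt_\bfX(x)\le C\,\hgt_{D(g_j)}(x)$ for $x\in D(g_j)(F)$. Since each point of $X$ lies in some $D(g_j)$, we get
\[ f(x)\le C\max_j \hgt_{D(g_j)}(x), \]
where a term is read as $0$ (not $\infty$) when $g_j(x)=0$, and $f$ is infinite only off $X$. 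The complement of $X$ is a proper Zariski closed set, hence of measure zero and irrelevant. It therefore suffices to treat $\mathbf X=D(g)$ with $g\neq 0$ a polynomial.

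\emph{Step 2: the principal case.} Here $F[\mathbf X]=F[x_1,\dots,x_n,1/g]$, and I take generators $x_1,\dots,x_n,1/g$. On a compact set $K\subseteq F^n$ the coordinates are bounded, so
\[ \hgt_\bfX(x)\le C_K + \max\{0,-\log|g(x)|\}. \]
It remains to show that $\int_K |\log|g(x)||^r\,dx<\infty$ for every $r<\infty$. Writing $\int_K|\log|g||^r=\sum_m m^r (\log \ell)^r\,\mu\{x\in K: |g(x)|=\ell^{-m}\}$, it is enough to have the sublevel bound $\mu\{x\in K:|g(x)|\le \epsilon\}\le C\epsilon^{\delta}$ for some $\delta>0$.

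\emph{Main obstacle.} This sublevel estimate must hold in positive characteristic, where resolution of singularities and Igusa theory are unavailable. I would prove it elementarily by induction on $n$. After a linear change of variables (over $F$, which is infinite), $g$ becomes monic in $x_n$ of degree $d$ up to a constant, with coefficients polynomial in the other variables. Over $\overline F$ factor $g=c\prod_i(x_n-\alpha_i(x'))$. Then $|g|\le\epsilon$ forces $|x_n-\alpha_i|\le(\epsilon/|c|)^{1/d}$ for some $i$, and the corresponding $x_n$-set has measure at most $d\cdot C(\epsilon/|c|)^{1/d}$, using the extension of the absolute value to $\overline F$. Integrating over $x'\in K'$ gives the bound $C\epsilon^{1/d}$ with $\delta=1/d$. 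Only a one-variable estimate is needed, so the induction collapses. Summing the series then gives the $L^r$ bound on $K$ for every $r$, and hence $f\in L^{<\infty}_{loc}$.
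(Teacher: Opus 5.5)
Your proof is correct. It differs from the paper mainly in that the paper gives no argument here at all: the lemma is simply imported from \cite[Corollary 3.0.3]{AGKS_jets}. You make it self-contained.

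Functoriality of heights (\Cref{Kottwitz lore}(2) for the inclusion $D(g)\hookrightarrow\mathbf{X}$) reduces everything to the bound $\hgt_{D(g)}\le C_K+\max\{0,-\log|g|\}$ on a compact $K$. The integrability of $\log|g|$ then follows from the sublevel estimate $\mu\{x\in K: |g(x)|\le\epsilon\}\le C_K\,\epsilon^{1/\deg g}$. You prove this by a generic linear change of coordinates, making the coefficient of $x_n^{\deg g}$ a nonzero constant, followed by a one-variable estimate that is uniform in $x'$. That estimate uses only the ultrametric inequality and the multiplicativity of the extension of $|\cdot|$ to $\overline F$. So it does not depend on the characteristic and needs no resolution of singularities.

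Your route also gives more than the lemma states. The same tail bound yields $|g|^{-s}\in L^1_{loc}$ for $0\le s<1/\deg g$, hence local integrability of $e^{s\,\hgt_{\mathbf{X}}}$ for small $s>0$, with explicit exponents. The citation only buys brevity.

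One simplification: you do not need the cover by all the $D(g_j)$ or the maximum. For a single nonzero $g_1$ in the ideal of the complement, $D(g_1)\subseteq\mathbf{X}$ and $Z(g_1)(F)$ is null. So already $f\le C\,\hgt_{D(g_1)}$ almost everywhere.
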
 

\section{A bound on the $L^r$-norm of a certain pushforward}\label{sec:Lr.bound}

\begin{notation}\label{not:phiA}
$ $
    For $A\in \fu$ define $\mdef{\phi_A}:=\mdef{\phi_{V,A}}:V\to F^n$ by $$\phi_A(v):=(B(A^iv,v)/\xi^i)_{i=0,\dots, n-1}$$   
\end{notation}

In this section we prove:
\begin{theorem}[The $L^r$-bound]
\label{thm:Lr-bound}
Let $1\le r<\infty$, let $\rho\in C^\infty_{c}(V)$. For \Dima{every} $A\in \fu^{rss}$ \Dima{we have}
$$
f_A:=\frac{(\phi_A)_*
\left(
\rho\,\mu_V
\right)
}{\mu_{F^n}}
\in L^r(F^n)
$$
Moreover, the function 
$$A\mapsto  \frac{\|f_{A}\|_{L^r(F^n)}}{\kappa(A)}$$ is in
$L^{<\infty}_{loc}(\fu)$
\end{theorem}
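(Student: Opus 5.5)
The plan is to diagonalize the quadratic map $\phi_A$ using the algebra generated by $A$. Then bound the pushforward fiberwise on a product of quadratic norm maps, and finally show that every $A$-dependent constant is $\kappa(A)$ times a polynomial in the Kottwitz height $\hgt$ of $\uu^{rss}$.

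\textbf{Step 1: the étale algebra attached to $A$.} Fix $A\in\fu^{rss}$ and let $K:=E[A]\subset \End_E(V)$. This is an étale $E$-algebra of degree $n$. The adjoint involution of $B$ restricts to an involution $\sigma$ of $K$, extending conjugation on $E$, with $\sigma(A)=-A$. Let $K_0:=K^\sigma$; it is an étale $F$-algebra of degree $n$, and $K/K_0$ is quadratic étale. Since $A$ is regular, $V$ is free of rank one over $K$. Choosing a generator identifies $V\cong K$, and then $B(x,y)=\operatorname{Tr}_{K/E}(c\,x\,\sigma(y))$ for some $c\in K_0^\times$. Hence
$$\phi_A(x)=\Lambda_A\bigl(N(x)\bigr),\qquad N(x):=x\,\sigma(x)\in K_0,\qquad \Lambda_A(y):=\bigl(\operatorname{Tr}_{K/E}(cA^iy)/\xi^i\bigr)_{i=0}^{n-1}.$$
One checks that $\Lambda_A:K_0\to F^n$ is an $F$-linear isomorphism: the factors $\xi^{-i}$ make the values lie in $F$, because $A^i/\xi^i$ is $\sigma$-fixed. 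Its determinant is, up to units, $N_{K_0/F}(c)$ times a Vandermonde-type factor in the eigenvalues, so $|\det\Lambda_A|$ is comparable to $|N(c)|\cdot|\Delta(A)|^{1/2}$ times discriminant factors of $K_0$.

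\textbf{Step 2: reduce to the norm map.} Change variables:
$$\|f_A\|_{L^r}=|\det\Lambda_A|^{-1+1/r}\,\bigl\|N_*(\rho\mu_V)/\mu_{K_0}\bigr\|_{L^r(K_0)}.$$
Decompose $K_0=\prod_j K_{0,j}$ into fields, so that $K=\prod_j K_j$. The norm map is then a product of norms of quadratic extensions $K_j/K_{0,j}$, which are either fields or split of the form $(x,y)\mapsto xy$. Bound $\rho$ by the indicator of a ball, and compare $\mu_V$ with the product of the self-dual Haar measures on the $K_j$; this costs discriminant factors. The volume of a fiber of $N$ over $y$ is then computed factor by factor:
\begin{itemize}
\item In the field case, the fiber is a coset of the norm-one torus. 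Its volume is the volume of the compact norm-one group, and the pushforward density is bounded by that volume and is supported in a bounded region.
\item In the split case, the density is comparable to $\max(1,\,|\text{valuation of } y \text{ relative to the support}|)$. This is in $L^r$ for all $r$, with norm polynomial in the size of the support.
\end{itemize}
The product of the norm-one volumes is exactly the volume of $(U_A)_0$, because $U_A$ is the norm-one torus of $K/K_0$. After comparing with the Kazhdan form $|\omega^K_{\bfU_A}|^{1/2}$, this product is $\kappa_0(A)$ up to discriminant factors.

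\textbf{Step 3: match normalizations (main obstacle).} I expect the main difficulty to be showing that all discriminant, conductor and $|N(c)|$ factors cancel, leaving exactly $|\Delta(A)|^{-1/2}$ up to factors of size $\hgt(A)^{C}$. The factors in question are those from $\det\Lambda_A$, from comparing $\mu_V$ with $\prod\mu_{K_j}$, from comparing Haar measure with the Kazhdan form, and from the radii of the balls containing the transported support of $\rho$. The exponent $-1+1/r$ complicates this: the $|\det\Lambda_A|^{1/r}$ part must be absorbed by the $L^r$ integration over the image, whose volume is of the same size. Stray powers of $|\Delta|$ cannot be tolerated, since these are not in $L^{<\infty}_{loc}$.

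My approach is to do all comparisons with the $O_F$-order $O_E[A]$ and the lattice it generates. All transition constants are then polynomials in $A$ and $\Delta(A)^{-1}$, so their logarithms are bounded by $C\cdot\hgt_{\uu^{rss}}(A)$ by \Cref{Kottwitz lore}. The genuine powers of $|\Delta|$ should be tracked exactly; they combine to $|\Delta|^{-1/2}$ by the standard identity relating the discriminant of $1,A,\dots,A^{n-1}$ to $\Delta$. The split-case logarithms are also bounded by a power of $\hgt$.

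\textbf{Conclusion.} The result is
$$\|f_A\|_{L^r}\le C\,\kappa(A)\,\hgt_{\uu^{rss}}(A)^{m}.$$
Since $\uu^{rss}$ is Zariski open in the affine space $\uu$, \Cref{ht is integrable} gives $\hgt^m\in L^{<\infty}_{loc}(\fu)$, which proves the theorem.
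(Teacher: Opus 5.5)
Your factorization $\phi_A=\Lambda_A\circ N$ through the norm map of $K=E[A]$ over $K_0=K^\sigma$ is correct. Your picture of the fibers (cosets of the norm-one torus, bounded density on field factors, logarithmic density on split factors) is the same as the paper's. The gap is Step~3, which is where the content of the theorem lies, and which you only assert.

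The multiplicative constants you list are $|\det\Lambda_A|$, the Jacobian between $\mu_V$ and $\prod_j\mu_{K_j}$, $|N(c)|$, the discriminant and conductor factors of $K_0$ and $K$, and the comparison of self-dual Haar measure with $|\omega^K_{\bfU_A}|^{1/2}$ on $(U_A)_0$. These are not polynomial functions of $A$ and $\Delta(A)^{-1}$. Some of them depend on the choice of generator $v_0$: replacing $v_0$ by $uv_0$ replaces $c$ by $c\,u\sigma(u)$. Only a specific combination is intrinsic.

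Your device for controlling them does not work. \Cref{Kottwitz lore} only yields $\log|K|\le C\hgt(A)$, i.e.\ $|K|\le \ell^{C\hgt(A)}$. Since $\log|\Delta(A)|^{-1}\le C\hgt(A)$, this class of bounds includes $|\Delta(A)|^{-C}$, which is exactly what you note cannot be tolerated. Height bounds are harmless only for quantities that enter logarithmically, as the radii do in the split-factor densities. Multiplicative normalizations must cancel exactly, and showing that they combine to $\kappa(A)=\kappa_0(A)|\Delta(A)|^{-1/2}$ is the whole point.

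The same problem reappears in your treatment of $|\det\Lambda_A|^{-1+1/r}$. There the radii enter the $L^r$ norm multiplicatively, through the volume of the product box containing $N(\operatorname{supp}\rho)$. Comparing that box with the skew region $\Lambda_A^{-1}(\Omega'')$ gives another non-polynomial factor. Your final bound in terms of $\hgt_{\uu^{rss}}(A)$ alone would also need a uniform-in-$A$ bound on the logarithmic singularities integrated over $c$, which you do not supply.

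The paper avoids all this bookkeeping with one exact identity of algebraic differential forms, \Cref{pullback of Gelfand Leray}. Pull back the Gelfand--Leray form $\omega_{\bfV}/\phi_A^*(dz_1\wedge\cdots\wedge dz_n)$ along the orbit map $a_{A,v}:\bfU_A\to\phi_A^{-1}(\phi_A(v))$ and square it; the result is $c\,\Delta(A)^{-1}\omega^{K}_{\bfU_A}$, with $c$ independent of $(A,v)$.

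Because this is an identity of forms, it can be checked over a splitting field. There $\phi_A$ is a Vandermonde matrix (whose determinant squares to $\Delta(A)$) composed with $((x_j),(y_j))\mapsto(x_jy_j)$, and the orbit map is $t\mapsto((t_ia_i),(t_i^{-1}b_i))$. The dependence on $v$ drops out.

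Since $|\omega^K_{\bfU_A}|^{1/2}=\kappa_0(A)\,\mu^{Ar}_{U_A}$ by definition, one gets exactly $f_A(c)=C_1\kappa(A)\,\mu^{Ar}_{U_A}(a_{A,v}^{-1}(\Omega))$. Every discriminant and conductor factor is absorbed into $\kappa_0(A)$ without ever being computed.

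The only remaining estimate is \Cref{arithmetic measure on tori}, in which $\|x\|_A$ enters only logarithmically, together with $\log\|x\|_A\le C\hgt_{\mathbf W}(A,c)$. This gives the pointwise bound $f_A(c)\le C\kappa(A)\hgt_{\mathbf W}(A,c)^{C}$. That bound is integrated jointly over $(A,c)\in\Omega'\times\Omega''$, where $\Omega''$ is a fixed compact set containing every $\phi_A(\Omega)$; this also removes the $-1+1/r$ issue.

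If you redo Step~3 with invariant differential forms on $K$, $K_0$ and the norm-one torus, instead of self-dual Haar measures, your factorization $\phi_A=\Lambda_A\circ N$ leads to precisely this identity.
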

\begin{proof}[Idea of the proof] If $A\in \fu^{rss}$ and $c\in F^n$ \Eitan{is generic} then $\phi_A ^{-1}(c)$ is a torsor under the torus $\bfU_A$.
\Eitan{
This gives rise to 3 measures on the fiber:
\begin{enumerate}
    \item The Gelfand-Leray measure.
    \item The measure $|\omega_{\bfU_A}^{K}|^{\frac 12}$ described in  \Cref{def:kap}.
    \item The Haar measure normalized on the unique maximal compact subgroup. We call this measure ``the arithmetic measure". 
\end{enumerate}
The value $f_A(c)$ is the integral of $\rho$ w.r.t. the first measure. We show in \Cref{pullback of Gelfand Leray} below that the ratio of the first two measures is $|\Delta(A)|^{\frac 12}$. The ratio of the last two measures is $\kappa_0(A)$. Therefore, the ratio $\frac{f_A(c)}{\kappa(A)}$ is the integral of $\rho$ w.r.t. the third measure. 

We show that the  arithmetic measure of a set $B$ is polynomial in the log of the diameter of $B$ (see \Cref{arithmetic measure on tori}). One can deduce from this that the function $$(A,c)\mapsto \frac{f_A(c)}{\kappa(A)}$$ has only logarithmic growth at its singularities (see \eqref{eq:f/kap}).

This implies the assertion in view of \S\ref{sec:kottwitz}.
}
\end{proof}

\subsection{$F$-rational character of $\bfU_A$}

\Nir{\begin{definition} \label{characters of U_A} Let $A\in \fu^{rss}$. Under the isomorphisms $\mathbf{E}(F^{sep})\cong F^{sep} \times F^{sep}$, $\mathbf{V}(F^{sep})\cong (F^{sep})^n \times \left( (F^{sep})^n \right)^*$ from \S\ref{ssec:prel.uni}, $A$ corresponds to an element of the form $(X,-X^T)$, where $X\in \mathfrak{gl}_n^{rss}(F^{sep})$. \begin{enumerate}
\item If $e_1,\ldots,e_n$ is a basis of eigenvectors of $X$ and $e_1^*,\ldots,e_n^*$ is the dual basis, let $\mathbf{V}_i:=(\mathbf{E} \times \Spec F^{sep})(e_i,e_i^*) \subseteq \mathbf{V} \times \Spec F^{sep}$.
\item If $x\in \bfU_A$ then $x$ preserves each $\mathbf{V}_i$ and acts as multiplication by an element of the form $(\psi_i(x),\psi_i(x) ^{-1})$. The morphisms $\psi_i$ are $F^{sep}$-characters of $\bfU_A$  and generate the lattice $X^*(\bfU_A \times \Spec F^{sep})$.
\item $\Gal(F^{sep}/F)$ acts on the set $\left\{ \psi_1^{\pm1},\ldots,\psi_n^{\pm1}\right\}$. For every orbit $\mathcal{O}$, let 
\[
\varphi_\mathcal{O}:= \prod_{\psi \in \mathcal{O}} \psi.
\]
The lattice $X^*(\bfU_A)$ of $F$-characters is generated by the $\varphi_\mathcal{O}$s.
\item For $x\in U_A$ let
\[
\|x\|_A:=\max \left\{ | \varphi_\mathcal{O} (x) | \mid \text{$\mathcal{O}$ is an orbit} \right\}.
\]
\end{enumerate} 
\end{definition}

\begin{definition} Let $\bfT$ be a torus defined over $F$, let $T=\bfT(F)$, and let $T_0 \subseteq T$ be the maximal compact subgroup of $T$. The Haar measure on $T$ normalized so that the measure of $T_0$ is 1 is called the arithmetic measure of $T$ and is denoted by $\mu_T^{Ar}$.
\end{definition} 
}

\begin{lemma} \label{arithmetic measure on tori} 
\Eitan{For every
$A\in \fu^{rss}$ and $R\in\R_{>1}$}
\Eitan{we have:}
\[
\mu^{Ar}_{U_A}\left( \left\{ x\in U_A \mid \|x\|_A \leq R\right\} \right) \leq \Eitan {(1+2\log_\ell R)^{n}} 
\]
\end{lemma}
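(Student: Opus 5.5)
The plan is to reduce the count to a lattice-point count via the valuation map on the torus $U_A$. Let $T:=U_A$ and $T_0$ its maximal compact subgroup. Define
\[
v:T\to \mathrm{Hom}(X^*(\bfU_A),\mathbb Z),\qquad \langle v(x),\varphi\rangle=\operatorname{val}(\varphi(x)).
\]
The kernel of $v$ is exactly $T_0$: an element on which every $F$-rational character is a unit is bounded, and conversely. Since $T_0$ is open, $v(T)$ is a sublattice of the rank-$d$ lattice $X_*:=\mathrm{Hom}(X^*(\bfU_A),\mathbb Z)$, where $d=\dim_F$ of the $F$-split part and $d\le n$ (indeed $X^*(\bfU_A)$ has rank at most $n$). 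Because $\mu^{Ar}_T(T_0)=1$, each fiber of $v$ is a $T_0$-coset of measure $1$. Hence
\[
\mu^{Ar}_T(\{\|x\|_A\le R\})=\#\{\lambda\in v(T): \langle\lambda,\varphi_\mathcal O\rangle\ge -\log_\ell R\ \text{for all orbits } \mathcal O\},
\]
using $|\varphi(x)|=\ell^{-\mathrm{val}\varphi(x)}$.

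Next I would use the structure of the orbits to turn the one-sided bounds into two-sided ones. The set $\{\psi_i^{\pm1}\}$ is stable under inversion, and Galois commutes with inversion. So for each orbit $\mathcal O$ the set $\mathcal O^{-1}$ is also an orbit, and $\varphi_{\mathcal O^{-1}}=\varphi_\mathcal O^{-1}$. The condition $\|x\|_A\le R$ therefore gives $-\log_\ell R\le \mathrm{val}\,\varphi_\mathcal O(x)\le \log_\ell R$ for every orbit. Thus each integer $\langle \lambda,\varphi_\mathcal O\rangle$ lies in an interval containing at most $1+2\log_\ell R$ integers.

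Finally, the $\varphi_\mathcal O$ generate $X^*(\bfU_A)$ (or at least a finite-index sublattice, which suffices here). So we can choose $d\le n$ orbits whose $\varphi_\mathcal O$ are linearly independent. The map $\lambda\mapsto(\langle\lambda,\varphi_{\mathcal O_j}\rangle)_{j\le d}$ is then injective on $X_*$, and the count is at most $(1+2\log_\ell R)^d$. Since $R>1$, we have $1+2\log_\ell R\ge1$, and this is at most $(1+2\log_\ell R)^n$.

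The main obstacle is the first step: checking that $\ker v=T_0$, i.e.\ that the maximal compact subgroup is precisely the locus where all $F$-rational characters have absolute value $1$, and that $T/T_0$ embeds in $X_*$. For anisotropic factors, such as the norm-one tori of quadratic extensions appearing here, $T$ is itself compact and $X^*$ is trivial on that part, which is consistent. I would cite the standard fact that $T/T_0\hookrightarrow \mathrm{Hom}(X^*(\bfT)_F,\mathbb Z)$ with $T_0$ the unique maximal compact subgroup, which holds in any characteristic.
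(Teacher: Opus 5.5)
Your proposal is correct and follows essentially the same route as the paper. Both arguments count $T_0$-cosets by injecting $U_A/T_0$ into $\mathbb{Z}^m$ via $\log_\ell|\varphi_{\mathcal O_i}|$ for orbits $\mathcal O_1,\dots,\mathcal O_m$ ($m\le n$) spanning $X^*(\bfU_A)\otimes\mathbb{Q}$. Your observation that $\mathcal O^{-1}$ is again an orbit with $\varphi_{\mathcal O^{-1}}=\varphi_{\mathcal O}^{-1}$ makes explicit the two-sided bound $|\log_\ell|\varphi_{\mathcal O}(x)||\le\log_\ell R$, which the paper uses implicitly to get the factor $2$.
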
 

\Nir{
\begin{proof} There are $m \leq n$ Galois orbits $\mathcal{O}_1,\ldots,\mathcal{O}_m \subseteq \left\{ \psi_1^{\pm1},\ldots,\psi_n^{\pm1} \right\}$ such that $\varphi_{\mathcal{O}_1},\ldots,\varphi_{\mathcal{O}_m}$ spans $X^*(\bfU_A) \otimes \mathbb{Q}$. Denote
\[
T_0:=\left\{ x\in U_A \mid (\forall \varphi \in X^*(\bfU_A))\, | \varphi(x)|=1 \right\}=\left\{ x\in U_A \mid (\forall i)\, | \varphi_{\mathcal{O}_i}(x)|=1 \right\}.
\]
Since $\varphi_{\mathcal{O}_1},\ldots,\varphi_{\mathcal{O}_m}$ spans $X^*(\bfU_A) \otimes \mathbb{Q}$, the map 
\[
xT_0\mapsto (\log_\ell | \varphi_{\mathcal{O}_1}(x)|,\ldots,\log_\ell | \varphi_{\mathcal{O}_m}(x)|)\in \mathbb{Z} ^m
\]
is injective on $U_A/T_0$. Thus, the set $\left\{ x\in U_A \mid \|x\|_A \leq R\right\}$ is contained in the union of at most $(1+2\log_\ell)^m$ cosets of $T_0$. 

Since $T_0$ is the maximal compact subgroup of $U_A$, $\mu^{Ar}_{U_A}(T_0)=1$ and the claim follows.
\end{proof} 
}
\begin{definition} $\mdef{\mathbf{Com^{rss}}}$ is the locus of pairs $(A,x)\in \uu^{rss} \times \bfU$ that commute.
\end{definition} 

\begin{lemma} \label{character bounds on tori} There is a natural number $C$ such that, for every $(A,x)\in Com^{rss}$,
\[
\log \| x\|_A \leq C \cdot \hgt_{\bf{Com^{rss}}}(A,x)
\]
\end{lemma}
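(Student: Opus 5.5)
The plan is to express each $|\varphi_{\mathcal O}(x)|$ through regular functions on $\mathbf{Com^{rss}}$ and then apply \Cref{Kottwitz lore}(1). The obstacle is that the characters $\psi_i$, and hence the $\varphi_{\mathcal O}$, are defined using eigenvectors of $A$. These do not give global regular functions on $\mathbf{Com^{rss}}$: they depend on $A$ only after passing to a finite cover, and on $A$'s Galois orbit structure. So I will bound $\|x\|_A$ via eigenvalues of $x$ instead.

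\textbf{Step 1: reduce to eigenvalues of $x$.} For $(A,x)\in Com^{rss}$, $x$ lies in the torus $\bfU_A$. By \Cref{characters of U_A}, over $F^{sep}$ the element $x$ acts on $\mathbf V_i$ by $(\psi_i(x),\psi_i(x)^{-1})$. Hence the multiset $\{\psi_i(x)^{\pm1}\}$ is exactly the set of eigenvalues of $x$, viewed as an $F$-linear endomorphism of $V$ (of dimension $2n$). Each $\varphi_{\mathcal O}(x)$ is a product of at most $2n$ such eigenvalues. Therefore
\[
\log\|x\|_A\le 2n\max_{\lambda}\max(0,\log|\lambda|),
\]
where $\lambda$ ranges over the eigenvalues of $x$ in $\overline F$.

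\textbf{Step 2: bound the eigenvalues by height.} Every eigenvalue $\lambda$ of $x$ is a root of the characteristic polynomial $\chi_x(t)=t^{2n}+a_{2n-1}(x)t^{2n-1}+\dots+a_0(x)$. The coefficients $a_j$ are regular functions on $\bfU$, hence on $\mathbf{Com^{rss}}$ by pullback. The standard root bound gives
\[
|\lambda|\le\max(1,|a_0(x)|,\dots,|a_{2n-1}(x)|),
\]
which is the same argument as in the proof of \Cref{Kottwitz lore}(4). Applying \Cref{Kottwitz lore}(1) to each $a_j$ then gives $\log|\lambda|\le C'\hgt_{\mathbf{Com^{rss}}}(A,x)$.

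\textbf{Step 3: combine.} Putting Steps 1 and 2 together yields the claim with $C=2nC'$, rounded up to a natural number. We use $\hgt\ge1$ to absorb the $\max(0,\cdot)$.

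The only point needing care is Step 1: the identification of the $\psi_i(x)^{\pm1}$ with the eigenvalues of $x$, and the fact that this is uniform in $A$. This follows directly from the explicit description in \Cref{characters of U_A}, so I expect no real obstacle. In particular, the argument never uses regularity of $A$ beyond guaranteeing that $\bfU_A$ is a torus.
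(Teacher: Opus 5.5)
Your proof is correct, and it takes a different route from the paper.

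\textbf{The paper's route.} The paper introduces a finite cover $\pi:\mathbf{S}\to\mathbf{Com^{rss}}$. Its points are triples $(A,x,\mathcal{L})$, where $\mathcal{L}\in\mathrm{Res}_{E/F}\mathbb{P}_{\mathbf{E}}(\mathbf{V})$ is an $A$-eigenline whose eigenvalue $\lambda$ satisfies $\lambda=-\overline{\lambda}$. It then uses the function $g$ recording the scalar by which $x$ acts on $\mathcal{L}$, together with $h(g)=g+\overline{g}$. At the point $(A,x,\mathcal{L}_i)$ one has $h(g)=\psi_i(x)+\psi_i(x)^{-1}$. The ultrametric inequality $|\psi_i(x)^{\pm1}|\le\max\{|\psi_i(x)+\psi_i(x)^{-1}|,1\}$ and Lemma \ref{Kottwitz lore}, applied through the finite map $\pi$, then give the bound.

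\textbf{Your route.} You observe that the $\psi_i(x)^{\pm1}$ are exactly the eigenvalues of $x$ as an $F$-linear operator on the $2n$-dimensional space $V$. You bound them by the non-archimedean root bound for the characteristic polynomial of $x$, whose coefficients are already regular functions on $\mathbf{Com^{rss}}$. This is the same integrality mechanism behind Lemma \ref{Kottwitz lore}(4), applied directly.

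\textbf{Comparison.}
\begin{enumerate}
\item You never leave $F$-points of $\mathbf{Com^{rss}}$. The paper evaluates heights at the $F^{sep}$-points $(A,x,\mathcal{L}_i)$ of $\mathbf{S}$. That is slightly outside Lemma \ref{Kottwitz lore} as stated, which is formulated for $F$-points, although its proof extends verbatim.
\item You avoid the auxiliary variety $\mathbf{S}$ and the need to check that $\pi$ is finite.
\item You only need the upper bound. The paper writes $\pm\log|h(g)|$, but the lower direction is not needed and can fail where $h(g)=0$.
\item The paper ties each character $\psi_i$ to a specific eigenline. That is more information than this lemma requires. As you note, each $\varphi_{\mathcal O}(x)$ is a product of at most $2n$ of the $\psi_i(x)^{\pm1}$, so controlling the unordered multiset of eigenvalues suffices.
\end{enumerate}
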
 
\Nir{
\begin{proof} Let $\Res_{E/F}\mathbb{P}_{\mathbf{E}}(\mathbf{V})$ be the Weil restriction of the projective space of $\mathbf{V}$ over $\mathbf{E}$. A point of $\Res_{E/F} \mathbb{P}_\mathbf{E}(\mathbf{V})(F^{sep})$ is in particular a free rank-1 sub $\mathbf{E}(F^{sep})$-module of $\mathbf{V}(F^{sep})$. Let $\mathbf{S} \subseteq \mathbf{Com^{rss}} \times \Res_{E/F}\mathbb{P}_\mathbf{E}(\mathbf{V})$ be the locus of triples $(A,x,\mathcal{L})$ such that $A$ preserves $\mathcal{L}$ and acts on $\mathcal{L}$ as multiplication by $\lambda \in \mathbf{E}$ satisfying $\lambda=-\overline{\lambda}$. The projection $\pi: \mathbf{S} \rightarrow \mathbf{Com^{rss}}$ is finite.

Let $g:\mathbf{S} \rightarrow \mathbf{E}^\times$ be the morphism such that if $(A,x,\mathcal{L})\in \mathbf{S}$ then $x$ acts on $\mathcal{L}$ as multiplication by $g(A,x,v)$. Let $h: \mathbf{E} \rightarrow \mathbb{A} ^1$ be the morphism $h(x)=x+\overline{x}$.

By \Cref{Kottwitz lore}, there is a  constant $C$ such that
\[
\pm \log| h(g(A,x,\mathcal{L})) | \leq C \hgt_{\bf{Com^{rss}}}(A,x).
\]
Let $(A,x)\in Com^{rss}$ and let $\psi_1,\ldots,\psi_n$ be as in \Cref{characters of U_A}. For each $i$ there is $\mathcal{L}_i\in \mathbb{P}_\mathbf{E}(\mathbf{V})(F^{sep})$ such that $g(A,x,\mathcal{L}_i)=(\psi_i(x),\psi_i(x) ^{-1})$. Thus,
\[
| \psi_i(x) | , | \psi_i(x) ^{-1} | \leq \max \left\{ | h(g(A,x,\mathcal{L}_i)|,1 \right\},
\]
and the claim follows.
\end{proof} 
}
\subsection{Height bounds}

\begin{lemma} \label{phi detrmines Gram} For every $i,j \geq 0$ there is a morphism $f_{i,j}:\uu \times \mathbb{A} ^n \rightarrow \mathbf{E}$ such that for every $(A,v)\in \uu \times \mathbf{V}$, $f_{i,j}(A,\phi_A(v))=B(A^iv,A^jv)$.
\end{lemma}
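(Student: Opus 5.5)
We will use the skew-adjointness of $A$ to move powers of $A$ from one argument of $B$ to the other, and the Cayley--Hamilton theorem to reduce large powers to powers below $n$. From $B(Ax,y)=-B(x,Ay)$ we get
\[
B(A^iv,A^jv)=(-1)^jB(A^{i+j}v,v)
\]
for all $i,j\ge 0$. So it suffices to express $B(A^kv,v)$, for every $k\ge 0$, as a polynomial in $A$ and in the coordinates of $\phi_A(v)$.

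For $0\le k\le n-1$ this holds by the definition of $\phi_A$ in \Cref{not:phiA}: $B(A^kv,v)=\xi^k\,\phi_A(v)_k$. Here $\xi\in E$ is fixed, so multiplication by $\xi^k$ is a morphism $\mathbb{A}^1\to\mathbf{E}$. Over $F$ this means we view $\phi_A(v)_k\in F\subset E$ and multiply in $\mathbf{E}=\Res_{E/F}\mathbb{A}^1$. Thus we may set $g_k(A,c):=\xi^k c_k$ for $c=(c_0,\dots,c_{n-1})$.

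For $k\ge n$ we use Cayley--Hamilton over $E$:
\[
A^n=-\sum_{i=0}^{n-1}c_{n-i}(A)A^i,
\]
where the $c_m$ are the coefficients of the characteristic polynomial. These are polynomial functions $\uu\to\mathbf{E}$; indeed $c_m=\xi^m b_m$ with $b_m\in O(\uu)^{\bfU}$. Multiplying by $A^{k-n}$ and pairing with $v$ gives the recursion
\[
B(A^kv,v)=-\sum_{i=0}^{n-1}c_{n-i}(A)\,B(A^{k-n+i}v,v).
\]
We then define $g_k$ inductively by
\[
g_k(A,c):=-\sum_{i=0}^{n-1}c_{n-i}(A)\,g_{k-n+i}(A,c),
\]
a morphism $\uu\times\mathbb{A}^n\to\mathbf{E}$. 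By induction on $k$, $g_k(A,\phi_A(v))=B(A^kv,v)$. Finally we set $f_{i,j}:=(-1)^jg_{i+j}$.

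There is no serious obstacle; the only point needing care is bookkeeping of the base field. $B$ is $E$-sesquilinear, but the target $\mathbf{E}$ is regarded as an $F$-variety. We therefore have to check that the maps $A\mapsto c_m(A)$ and the $E$-linear operations above are genuine $F$-morphisms into $\Res_{E/F}\mathbb{A}^1$. This is clear, since $\End_E(\bfV)$ is itself a restriction of scalars and the characteristic polynomial is defined over $E$. The identity then holds on $F^{sep}$-points, hence as morphisms.
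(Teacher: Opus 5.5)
Your proof is correct and takes the same route as the paper. The paper likewise takes $f_{0,0},\dots,f_{n-1,0}$ to be powers of $\xi$ times coordinate projections, obtains $f_{i,0}$ for $i\ge n$ from Cayley--Hamilton, and uses $A^*=-A$ to pass to all $f_{i,j}$; your explicit recursion and the identity $B(A^iv,A^jv)=(-1)^jB(A^{i+j}v,v)$ just spell out details the paper leaves implicit.
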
 

\begin{proof} $f_{0,0},\ldots,f_{n-1,0}$ are powers of $\xi$ times coordinate projections. By Cayley--Hamilton, $f_{i,0}$ exists for $i \geq n$. Since $A^*=-A$, $f_{i,j}$ exists for  $i,j \geq 0$.
\end{proof} 

\begin{definition} $\mathbf{M} \subseteq \uu^{rss} \times \mathbf{V}$ is the locus of pairs $(A,v)$ such that $v,Av,\ldots,A^{n-1}v$ is a free $\mathbf{E}$-basis.
\end{definition} 

\begin{lemma} \label{constructing W} There is a constant $C$ and $\mathbf{W} \subseteq \uu^{rss} \times \mathbb{A} ^n$ such that if $(A,v)\in M$ then $(A,\phi_A(v))\in W$ and $\hgt_\bfM(A,v) \leq C \cdot \big(\hgt_\bfW(A,\phi_A(v))+\hgt_\bfV(v) \big)$.
\end{lemma}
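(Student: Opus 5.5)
We will take $\mathbf{W}$ to be the locus of pairs $(A,c)\in \uu^{rss}\times \mathbb{A}^n$ such that the Gram matrix $G(A,c):=\big(f_{i,j}(A,c)\big)_{0\le i,j\le n-1}$, built from the morphisms of \Cref{phi detrmines Gram}, is invertible. More precisely, we require that $\det G(A,c)\in \mathbf{E}$ is a unit of $\mathbf{E}$, i.e.\ that its norm $\det G\cdot\overline{\det G}$ does not vanish. This is a Zariski open, hence quasi-affine, subvariety. If $(A,v)\in M$, then $v,Av,\dots,A^{n-1}v$ is an $\mathbf{E}$-basis, and since $B$ is non-degenerate the Gram matrix $\big(B(A^iv,A^jv)\big)$ of this basis is invertible. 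By \Cref{phi detrmines Gram} this matrix equals $G(A,\phi_A(v))$, so $(A,\phi_A(v))\in W$.

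For the height bound, we first choose generators of $F[\mathbf{M}]$. The variety $\mathbf{M}$ is the open subset of $\uu^{rss}\times\mathbf{V}$ where $D(A,v):=N_{E/F}\det(v,Av,\dots,A^{n-1}v)$ is nonzero. Hence $F[\mathbf{M}]$ is generated by generators of $F[\uu^{rss}]$, the coordinates of $v$, and $D^{-1}$. By \Cref{Kottwitz lore}(3) this gives
\[
\hgt_\bfM(A,v)\le C\big(\hgt_{\uu^{rss}}(A)+\hgt_\bfV(v)+\max(1,-\log|D(A,v)|)\big).
\]
The first two terms are harmless: $\hgt_{\uu^{rss}}(A)\le C\hgt_\bfW(A,\phi_A(v))$ by \Cref{Kottwitz lore}(2), applied to the projection $\mathbf{W}\to\uu^{rss}$. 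The term $\hgt_\bfV(v)$ is already present on the right-hand side of the desired inequality.

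The remaining, and main, step is to control $-\log|D|$. We use the identity
\[
\det G(A,\phi_A(v))=\det\big(B(A^iv,A^jv)\big)=\det(P)\,\overline{\det(P)}\,\det(B_0),
\]
where $P$ is the matrix of $(v,\dots,A^{n-1}v)$ in a fixed basis and $B_0$ is the (invertible, constant) Gram matrix of $B$ in that basis. Thus $N_{E/F}\det G(A,\phi_A(v))$ equals $D(A,v)^2$ up to a nonzero constant. The function $\big(N_{E/F}\det G\big)^{-1}$ belongs to $F[\mathbf{W}]$, so by \Cref{Kottwitz lore}(1) we get
\[
-\log|D(A,v)|\le C\,\hgt_\bfW(A,\phi_A(v)).
\]
Combining the three bounds gives the claim. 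The only delicate point is verifying that this determinant identity holds scheme-theoretically, including in the split case $E=F\times F$, where the norm should be read as the product of the two components. This can be checked after base change to $F^{sep}$ using \Cref{unitary if F x F}.
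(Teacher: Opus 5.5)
Your proposal is correct and follows the paper's argument: you use the same $\mathbf{W}$ (the locus where the Gram matrix $(f_{i,j}(A,c))$ is invertible) and compare heights via \Cref{Kottwitz lore}. The paper phrases the comparison through the graph $\mathbf{X}$ of $(A,v)\mapsto \phi_A(v)$ viewed inside $\mathbf{V}\times\mathbf{W}$, and your explicit bound on $-\log|D|$ via $\det G=\det P\,\overline{\det P}\,\det B_0$ is exactly the fact the paper leaves implicit: it makes this graph closed in $\mathbf{V}\times\mathbf{W}$, which is what the inequality $\hgt_{\mathbf{X}}\le C\,\hgt_{\mathbf{V}\times\mathbf{W}}$ needs.
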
 

\begin{proof} Let $f_{i,j}$ be the morphisms from \Cref{phi detrmines Gram}. Let $\mathbf{W} \subseteq \uu^{rss} \times \mathbb{A} ^n$ be the locus of points $(A,c)$ such that the matrix $(f_{i,j}(A,c))$ is invertible. We show the claim holds for this $\mathbf{W}$.

Let $\mathbf{X} \subseteq \mathbf{M} \times \mathbb{A} ^n$ be the graph of the map $(A,v) \mapsto \phi_A(v)$. Since the Gram matrix of a basis in a non-degenerate Hermitian space is invertible, we get that (after a permutation of the coordinates) $\mathbf{X} \subseteq \mathbf{V} \times \mathbf{W}$. By \Cref{Kottwitz lore},
\Dima{
we can find constants $C_1,C_2,C$ s.t.
\[
\hgt_\bfM(A,v)\leq C_1\hgt_\bfX(A,v,\phi_A(v)) \leq C_2\hgt_{\bf{V \times W}}(A,v,\phi_A(v)) \leq C(\hgt_\bfV(v)+\hgt_\bfW(A,\phi_A(v))).
\]
}
\end{proof} 

\begin{definition} $ $ \begin{enumerate}
\item $\mathbf{R}_1 \subseteq \mathbf{M} \times \bfU$ is the variety of triples $(A,v,x)$ such that $x\in \bfU_A$.
\item $\mathbf{R}_2 \subseteq \mathbf{M} \times \mathbf{V}$ is the variety of triples $(A,v,u)$ such that $\phi_A(u)=\phi_A(v)$.
\item $a: \mathbf{R}_1 \rightarrow \mathbf{R}_2$ is the morphism $a(A,v,x)=(A,v,xv)$.
\end{enumerate} 
\end{definition}

\begin{lemma} \label{a is invertible} $ $ \begin{enumerate}
\item $a$ is an isomorphism. 
\item If $(A,v)\in M$ then the action map $a_{A,v}=a(A,v,-):\bfU_A \rightarrow \phi_A ^{-1} (\phi_A(v))$ is an isomorphism. 
\item There is a constant $C$ and $\mathbf{W} \subseteq \uu^{rss} \times \mathbb{A} ^n$ such that if $(A,v,u)\in R_2$ then $(A,\phi_A(v))\in W$ and 
\[
\hgt_\bfU(a_{A,v} ^{-1}(u)) \leq C(\hgt_\bfW(A,\phi_A(v))+\hgt_\bfV(v)+\hgt_\bfV(u)).
\]
\end{enumerate} 
\end{lemma}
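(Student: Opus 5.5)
The plan is to treat the three parts in order, with (1) carrying most of the weight; (2) and (3) will follow formally.

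\textbf{Part (1).} The idea is to construct an explicit inverse morphism $\mathbf{R}_2\to\mathbf{R}_1$. Given $(A,v,u)\in\mathbf{R}_2$, the point $(A,v)$ lies in $\mathbf{M}$, so $v,Av,\dots,A^{n-1}v$ is a free $\mathbf{E}$-basis. Define $x$ to be the unique $\mathbf{E}$-linear map with $x(A^iv)=A^iu$ for $0\le i\le n-1$. Its matrix in the basis $(A^iv)$ is a morphism in $(A,v,u)$: one inverts the matrix whose columns are $A^iv$, and its determinant is a unit on $\mathbf{M}$. Two checks are then needed.

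\emph{$x$ commutes with $A$.} By Cayley--Hamilton, $A^nv=-\sum c_{n-i}(A)A^iv$, and the same relation holds with $u$ in place of $v$. Hence $xA(A^iv)=A(xA^iv)$ also for $i=n-1$.

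\emph{$x\in\bfU$.} We need $B(xA^iv,xA^jv)=B(A^iv,A^jv)$, that is, $B(A^iu,A^ju)=B(A^iv,A^jv)$. By \Cref{phi detrmines Gram} both sides equal $f_{i,j}(A,\phi_A(u))=f_{i,j}(A,\phi_A(v))$.

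Thus $(A,v,u)\mapsto(A,v,x)$ is a morphism into $\mathbf{R}_1$, and by construction it inverts $a$. Conversely, any $x\in\bfU_A$ with $xv=u$ is forced to equal this map, since $xA^iv=A^ixv=A^iu$. One should also note that $a$ actually lands in $\mathbf{R}_2$: for $x\in \bfU_A$ we have $B(A^ixv,xv)=B(xA^iv,xv)=B(A^iv,v)$. I expect the main subtlety to be scheme-theoretic. One must make sure both compositions are the identity as morphisms of varieties, not merely on points, and this is handled by the explicit matrix formula. Since the formula for $x$ is polynomial in $u$ and in the inverse of the Gram/Krylov determinant, there is no real obstacle.

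\textbf{Part (2).} This is the fiber of (1) over a fixed $(A,v)\in\mathbf{M}$, since $\mathbf{R}_1$ and $\mathbf{R}_2$ are both varieties over $\mathbf{M}$ and $a$ is an $\mathbf{M}$-isomorphism.

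\textbf{Part (3).} Take $\mathbf{W}$ from \Cref{constructing W}; then $(A,\phi_A(v))\in W$. Write $a^{-1}(A,v,u)=(A,v,x)$. Applying \Cref{Kottwitz lore}(2) first to the projection $\mathbf{R}_1\to\bfU$ and then to the morphism $a^{-1}$, we get
\[
\hgt_\bfU(x)\le C_1\hgt_{\mathbf{R}_1}(A,v,x)\le C_2\hgt_{\mathbf{R}_2}(A,v,u).
\]
Since $\mathbf{R}_2$ is a closed subvariety of $\mathbf{M}\times\mathbf{V}$, the height on $\mathbf{R}_2$ is bounded by a multiple of $\hgt_\bfM(A,v)+\hgt_\bfV(u)$. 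Finally, \Cref{constructing W} bounds $\hgt_\bfM(A,v)$ by $C(\hgt_\bfW(A,\phi_A(v))+\hgt_\bfV(v))$. Combining these constants gives the claimed inequality.
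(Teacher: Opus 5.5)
Your proposal is correct and follows the paper's proof essentially step for step. The inverse of $a$ is the $\mathbf{E}$-linear map sending $A^iv$ to $A^iu$; it is an isometry by \Cref{phi detrmines Gram} and a morphism by Cramer's rule, with (2) obtained fiberwise and (3) by the same chain of \Cref{Kottwitz lore} and \Cref{constructing W} estimates. The only cosmetic difference is that the paper writes this map as the polynomial $x=\sum\alpha_iA^i$ in $A$ (where $u=\sum\alpha_iA^iv$), so commutation with $A$ is automatic, whereas you check commutation via Cayley--Hamilton.
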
 

\begin{proof} $ $ \begin{enumerate}

\item  Let $(A,v,u)\in R_2$. Since $(A,v)\in M$, there are $\alpha_0,\ldots,\alpha_{n-1}\in E$ such that $u=\sum \alpha_i A^iv$. The matrix $x=\sum \alpha_i A^i$ commutes with $A$, maps $v$ to $u$, and hence maps $A^iv$ to $A^iu$ for $i \geq 0$. By \Cref{phi detrmines Gram}, the basis $(v,Av,\ldots,A^{n-1}v)$ is isometric to $(u,Au,\ldots,A^{n-1}u)$. Thus $x$ is an isometry, so $x\in \bfU_A$. By Cramer, the $\alpha_i$ (and hence $x$) are polynomial in $(A,v,u)$. This map is the inverse of $a$.
\item Follows from $(1)$.
\item We show the claim holds for the $\mathbf{W}$ constructed in \Cref{constructing W}.

By \Cref{constructing W} and \Cref{Kottwitz lore} there are constants $C_1,\ldots,C_4$ such that
\[
\hgt_\bfU(a_{A,v} ^{-1}(u)) \leq C_1\hgt_{\bf{R_1}}(a ^{-1} (A,v,u)) \leq C_2\hgt_{\bf{R_2}}(A,v,u) \leq 
\]
\[
\leq C_3(\hgt_{\bfM}(A,v)+\hgt_\bfV(u)) \leq C_4((A,\phi_A(v))+\hgt_\bfV(v)+\hgt_\bfV(u)).
\]
\end{enumerate} 
\end{proof} 

\subsection{The Gelfand--Leray form of $\phi_A$}

\begin{definition} Let $\mathbf{E}=\Res_{E/F} \mathbb{A}^1$. Fix an $F$-linear form $\eta \in \wedge ^2\Eitan{(E^*)}$ and let $\omega_{\mathbf{E}}\in \Omega ^2(\mathbf{E})$ be the translation-invariant 2-form such that $\omega_{\mathbf{E}}|_0= \eta$. Let $\omega_{\bfV}$ be the external wedge power of $\omega_{\mathbf{E}}$.
\end{definition} 

\begin{definition} Let $f: \mathbf{X} \rightarrow \mathbf{Y}$ be a morphism between smooth varieties, let $\omega_{\mathbf{X}}$ and $\omega_\mathbf{Y}$ be regular top differential forms on $\mathbf{X}$ and $\mathbf{Y}$, and let $x\in X$ be a point. Assume that $f$ is smooth at $x$. The short exact sequence
\[
0 \rightarrow T_x f ^{-1}(f(x)) \rightarrow T_x \mathbf{X} \overset{df}{\rightarrow} T_{f(x)}\mathbf{Y} \rightarrow 0
\]
induces an isomorphism 
\[
\RamiA{\iota} : \RamiA{\wedge^{top}((T_x \mathbf{X})^*)} \overset{\cong}{\rightarrow} \big( \RamiA{\wedge^{top} ((T_x f ^{-1}(f(x)))^*)} \big) \otimes \big( \RamiA{\wedge^{top} ((T_{f(x)} \mathbf{Y})^*)} \big),
\]
so there is a top form $\eta \in \RamiA{\wedge^{top} ((T_x f ^{-1}(f(x)))^*)}$ such that $$\RamiA{\iota} (\omega_\mathbf{X})= \eta \otimes \omega_\mathbf{Y}.$$ We denote this top form by $\frac{\omega_\mathbf{X}}{f^*\omega_\mathbf{Y}}\vert_x$ and call it the Gelfand--Leray form of $f$ at $x$.

This gives rise to a relative top-form on the smooth locus of $f$. We denote it by $\frac{\omega_\mathbf{X}}{f^*\omega_\mathbf{Y}}$.
\end{definition} 

\begin{lemma} \label{pullback of Gelfand Leray} There is a constant $c\in E$ such that if $(A,v)\in M$ then
\[
\left( a_{A,v}^* \left( \frac{\omega_{\bfV}}{\phi_A^*(dz_1 \wedge \cdots \wedge dz_n)}\right) \right) ^{\otimes 2}= c \cdot \Delta(A)\Eitan{^{-1}} \cdot \omega_{\bfU_A}^{\Dima{K}}.
\]
\end{lemma}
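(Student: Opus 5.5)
We reduce the identity to a pointwise computation after base change to $F^{sep}$, where everything splits. Both sides are sections of the bicanonical bundle of the torus $\bfU_A$. The left side is $a_{A,v}^*$ of the squared Gelfand--Leray form. Since $a_{A,v}$ is equivariant and $\phi_A$ is $\bfU_A$-invariant, the Gelfand--Leray form on the fibre $\phi_A^{-1}(\phi_A(v))$ is invariant under $\bfU_A$, because $\bfU_A$ preserves $\omega_{\bfV}$ up to the character $\det$, which has absolute value one. So both sides are invariant sections on a torus. Each is therefore a constant multiple of $\omega_{\bfU_A}^K$, and it suffices to compare them at the identity, i.e.\ at the point $v$ of the fibre. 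Everything is algebraic in $(A,v)\in M$, so we may verify the identity geometrically over $F^{sep}$. Since $M$ is irreducible, it suffices to check it on a dense set. This also shows that $c$ can be taken independent of $(A,v)$, once the ratio is shown to be a constant function on $M$.

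Over $F^{sep}$ we use \Cref{unitary if F x F} and \Cref{characters of U_A}. Write $A=(X,-X^T)$ with $X=\mathrm{diag}(\lambda_1,\dots,\lambda_n)$ in the eigenbasis $e_i$, and $v=(\sum s_ie_i,\sum t_ie_i^*)$. Then $B(A^kv,v)$ becomes, up to fixed constants (powers of $\xi$ and signs coming from the identification of $E\otimes F^{sep}$ with $F^{sep}\times F^{sep}$), $\sum_i \lambda_i^k s_it_i$ in one coordinate. Hence
\[
\phi_A(v)=\Lambda\cdot (s_1t_1,\dots,s_nt_n),
\]
where $\Lambda$ is the Vandermonde matrix $(\lambda_i^k)$, up to diagonal constants. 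Thus $\phi_A$ factors as the map $(s,t)\mapsto (s_it_i)_i$ followed by the linear map $\Lambda$. The torus $\bfU_A\cong \mathbb{G}_m^n$ acts by $(s_i,t_i)\mapsto(\psi_is_i,\psi_i^{-1}t_i)$, and $\omega_{\bfU_A}^K=(\prod d\psi_i/\psi_i)^{\otimes2}$.

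Next we compute the Gelfand--Leray form. For the map $(s_i,t_i)\mapsto s_it_i$ in one pair of coordinates, the form is $ds\wedge dt/d(st)=ds/s$ on the fibre. Pulled back along the orbit map $\psi\mapsto(\psi s,\psi^{-1}t)$, this becomes $d\psi/\psi$. Taking products over $i$, the form $\omega_\bfV/(\prod_i d(s_it_i))$ pulls back to $\pm\prod d\psi_i/\psi_i$, up to the constant relating $\omega_\bfV$ to $\prod ds_i\wedge dt_i$. The linear factor $\Lambda$ multiplies $dz_1\wedge\dots\wedge dz_n$ by $\det\Lambda=\pm\prod_{i<j}(\lambda_i-\lambda_j)$. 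Therefore the Gelfand--Leray form of $\phi_A$ equals the previous form divided by $\det\Lambda$. Squaring gives
\[
(\det\Lambda)^{-2}=\prod_{i<j}(\lambda_i-\lambda_j)^{-2}.
\]

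The remaining step is to identify $\prod_{i<j}(\lambda_i-\lambda_j)^2$ with $\Delta(A)$ up to a universal constant. This constant comes from the normalization in \cite[7.2]{Kot}, possibly a power of $\xi$ or a sign, and is independent of $A$ because the roots of $\uu$ are $\lambda_i-\lambda_j$. We also need to track the normalizing constants: $\eta$, the power of $\xi$ in $\phi_A$, and the identification of $E\otimes F^{sep}$. All of these are universal, so they combine into a single constant $c$, which lies in $E$ (indeed in $F$ after squaring). The main obstacle is this bookkeeping of constants and squaring signs, to ensure that $c$ does not depend on $(A,v)$. The invariance argument of the first paragraph handles the dependence on $v$, and the explicit formula above handles the dependence on $A$.
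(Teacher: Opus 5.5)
Your proposal is correct and is essentially the paper's proof. In both, after splitting over a field containing the eigenvalues, $\phi_A$ factors as the coordinatewise product $(s,t)\mapsto(s_it_i)_i$ followed by a Vandermonde matrix. The Gelfand--Leray form is then $\prod_i ds_i/s_i$ divided by the Vandermonde determinant, it pulls back along the orbit map to $\pm\prod_i d\psi_i/\psi_i$, and the squared Vandermonde equals $\Delta(A)$ up to a universal constant.

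The invariance and density reductions in your first paragraph are unnecessary, since the explicit computation already gives the identity on all of $\bfU_A$ for each $(A,v)\in M$. Note also that invariance of the algebraic form $\omega_{\bfV}$ needs $\det_F(g)=N_{E/F}(\det_E g)=1$, which does hold for $g\in\bfU$; the condition $|\det g|=1$ only gives invariance of the measure.
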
 

\begin{proof} It suffices to show that there is a field extension $L/F$ and a trivialization $\tau: \bfU_A \times \Spec(L) \rightarrow \mathbb{G}_m^n$ such that
\begin{equation} \label{pull of Gelfand Leray}
a_{A,v}^* \left( \frac{\omega_{\bfV}}{\phi_A^*(dz_1\wedge \cdots \wedge dz_n)} \right)^{\Eitan{\otimes 2}} =c \cdot \Delta(A)^{-1} \cdot \tau^* \left( \frac{dt_1}{t_1}\wedge \cdots \wedge \frac{dt_n}{t_n} \right)^\Eitan{\otimes 2} .
\end{equation}

If $E$ is a quadratic field extension of $F$, let $L \supseteq E$ be a field containing all eigenvalues of $A$. If $E=F \times F$, write $A=(X,-X^T)$ as in \Cref{unitary if F x F} and let $L$ be a field containing all eigenvalues of $X$. In both cases, $\Dima{E\otimes_F L}=L \times L$, $\xi = (a,-a)$ for some $a\in L$, $V=L^n \times (L^n)^*$, $B=B_s$, and there is a diagonalizable matrix $Y$ such that $A=(Y,-Y^T)$.

If $e_1,\ldots,e_n$ is an eigenbasis of $Y$ with eigenvalues $\lambda_1,\ldots,\lambda_n$ then the vectors $(e_i,e_i^*)$ form an orthonormal basis of $V$. \Eitan{After multiplying $e_1$ by a scalar, we can further assume that the orthonormal basis $(e_i,e_i^*)$ is unimodular. Since unimodular matrices preserve top forms on $V$,} we may assume that $A$ is diagonal and thus $\phi_A$ has the form
\[
\phi_A \left( \sum \alpha_j v_j \right) = \left( \frac1{\xi ^i} B \left(  \sum \alpha_j A^i v_j,\sum \alpha_j v_j \right) \right) = \left( \sum_j \left( \frac{\lambda_j}{a} \right)^i \alpha_j \overline{\alpha_j}\right) \quad \alpha_j\in L \times L 
\]
i.e., $\phi_A$ is the composition of the map $\psi: L^n \times L^n \rightarrow L^n$
\[
\psi \big( (x_j),(y_j) \big) \mapsto \left( x_jy_j \right) 
\]
and the Vandermonde matrix $\left( \begin{matrix} \left( \frac{\lambda_j}{a} \right) ^i\end{matrix} \right)$. 
\Eitan{Denote the determinant of this Vandermonde matrix by $\cV(A)$. Note that $\cV(A)^2=\Delta(A)$. Now,}
\[
\phi_A^*(dz_1\wedge \cdots \wedge dz_n)=\det \left( \begin{matrix} \left( \frac{\lambda_j}{a} \right) ^i\end{matrix} \right) \cdot (x_1dy_1+y_1dx_1)\wedge \cdots \wedge (x_ndy_n+y_ndx_n)=
\]
\[
=a^{-\frac{n(n-1)}{2}} \cdot \Eitan{\cV(A)} \cdot (x_1dy_1+y_1dx_1)\wedge \cdots \wedge (x_ndy_n+y_ndx_n)
\]
and
\[
\frac{\omega_\bfV}{\phi_A^*(dz_1\wedge \cdots \wedge dz_n)}=a^{\frac{n(n-1)}2} \Eitan{\cV(A)^{-1}} \cdot \left( \frac{dx_1}{x_1} \wedge \cdots \wedge \frac{dx_n}{x_n} \right).
\]

On the other hand, the basis $v_j$ gives a trivialization of $\bfU_A$ under which the map $a_{A,v}$ becomes
\[
a_{A,v}(t_1,\ldots,t_n)=\big( (t_ia_i),(t_i ^{-1} b_i)\big)
\]
(for $v=((a_i),(b_i))$) and \eqref{pull of Gelfand Leray} follows.
\end{proof} 

\subsection{Proof of \Cref{thm:Lr-bound}}

We can assume that $\rho=1_\Omega$ where $\Omega \subseteq V$ is open compact.

Let $\mathbf{W}$ be as in \Cref{a is invertible}. Let $(A,c)\in W$. If $\phi_A ^{-1} (c)(F)\cap \Omega = \emptyset$ then $f_A(c)=0$. Otherwise, choose $v\in \phi_A ^{-1}(c)(F)\cap \Omega$. By \Cref{pullback of Gelfand Leray} there is a constant $C_1$ such that 
\[
f_A(c)=\frac{(\phi_A)_*(1_\Omega \mu_V)}{\lambda_{F^n}}(c)=\int_{\phi_A ^{-1} (c)(F)\cap \Omega} \left| \frac{\omega_\bfV}{\phi_A^*(dz_1\wedge \cdots \wedge dz_n)} \right|=
\]
\[
=\int_{a_{A,v} ^{-1}(\Omega)} \left| a_{A,v}^* \left( \frac{\omega_\bfV}{\phi_A^*(dz_1\wedge \cdots \wedge dz_n)} \right)\right|=C_1 \cdot | \Delta(A)|^{-\frac12} \cdot \Eitan{|\omega^{K}_{\bfU_x}|^{\frac 12}}(a_{A,v} ^{-1}(\Omega))=
\]
\[
=C_1 \cdot \kappa(A) \cdot \mu_{\Eitan{U_x}}^{Ar}\left( a_{A,v} ^{-1}(\Omega) \right).
\]
Let $x\in a_{A,v} ^{-1}(\Omega)$, i.e., $x=a_{A,v} ^{-1}(u)$ where $u\in \Omega$. By \Cref{character bounds on tori}, \Cref{Kottwitz lore}, and \Cref{a is invertible} there are constants $C_2,\ldots,C_5$ such that 
\[
\log \|x\|_A \leq C_2 \cdot \hgt_{\bf{Com^{rss}}}(A,x) \leq C_3 \cdot \big( \hgt_{\uu^{rss}}(A)+\hgt_\bfU(x) \big) \leq 
\]
\[
\leq C_4 \cdot \big( \hgt_{\uu^{rss}}(A)+\hgt_\bfV(v)+\hgt_\bfV(u)+\hgt_\bfW(A,c) \big) \leq C_5 \cdot \hgt_\bfW(A,c).
\]
By \Cref{arithmetic measure on tori}, there is a constant $C_6\in \Z_{>1}$ such that 
\begin{equation}\label{eq:f/kap}
f_A(c)\leq C_6 \cdot \kappa(A) \cdot \hgt_\bfW(A,c)^\Eitan{C_6}.    
\end{equation}
\Nir{By \Cref{ht is integrable}, $f_A$ is in $L^{<\infty}(F^n)$.}

Let $s,r\geq 1$ and let $\Omega' \subseteq \fu$ be compact open. 
\Eitan{We would like to show that the function 
$$A\mapsto  \frac{\| f_A \|_r}{\kappa(A)}$$ is in $L^s(\Omega')$. 
Without loss of generality we may assume that $s\geq r$.}

There is an open compact $\Omega'' \subseteq F^n$ such that $\phi_A(v)\in \Omega''$ for all $A\in \Omega'$ and $v\in \Omega$. 
\Nir{
We have:
\begin{align*}
\int_{A\in \Omega'} \left( \frac{\| f_A \|_r}{\kappa(A)} \right)^s \lambda_\fu
&\overset{\eqref{eq:f/kap}}{\leq} C_6^s \cdot \int_{A\in \Omega'} \| 1_{\Omega''}\fH_W(A,\cdot )^{C_6} \|_r^{s} \lambda_\fu \\
&\leq C_6^s \cdot \lambda_{F^n}(\Omega'')^{\frac sr-1} \cdot \int_{A\in \Omega'} \|1_{\Omega''}\fH_W(A,\cdot)^{C_6}\|_s^s \lambda_\fu \\
&= C_6^s \cdot \lambda_{F^n}(\Omega'')^{\frac sr-1} \int_{(A,c) \in \Omega' \times \Omega''} \fH_W(A ,c)^{s{C_6}} \lambda_{\fu \times F^n} \\
&\overset{\text{Lem \ref{ht is integrable}}}{<} \infty .
\end{align*}}

\section{Proof of \Cref{thm:kappa.chv}}\label{sec:pf.kappa.chv}

Let $V=W \oplus Ev$ and let $\beta=B(v,v) \in F.$ We assume $\beta \ne 0.$
Now for $w \in W$ define a linear functional $w^{*}$ on $W$ by 
$$w^{*}(y)=-\beta^{-1} \cdot B(y,w).$$
\Nir{The following is an easy computation:}
\begin{lemma}
    Every element $X \in \fu_V$ has the form 
$$X=X(A,w,d):=
\begin{pmatrix}
A & w \\
w^* & \xi d
\end{pmatrix},
$$
where $A\in\fu_{W}$, $w\in W$ and $d\in F$.
This gives an isomorphism
\begin{equation} \label{u is product} \fu_{V} \cong \fu_W \oplus W \oplus F.
\end{equation}
\end{lemma}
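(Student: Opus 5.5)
The plan is a direct verification: write an arbitrary $E$-linear endomorphism of $V=W\oplus Ev$ in block form and then impose the skew-Hermitian condition block by block. An $E$-linear $X\in\End_E(V)$ is determined by four pieces of data:
\begin{itemize}
\item an $E$-linear map $A:W\to W$;
\item a vector $w\in W$, the $W$-component of $Xv$;
\item an $E$-linear functional $\lambda:W\to E$, the $v$-coefficient of $X|_W$;
\item a scalar $e\in E$, the $v$-coefficient of $Xv$.
\end{itemize}
In these terms $X(y+\alpha v)=(Ay+\alpha w)+(\lambda(y)+\alpha e)v$ for $y\in W$ and $\alpha\in E$. Since the condition $B(Xu,u')+B(u,Xu')=0$ is sesquilinear in $(u,u')$, it suffices to test it on pairs $(u,u')\in W\times W$, $W\times Ev$ and $Ev\times Ev$. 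The remaining pairs $Ev\times W$ follow by Hermitian symmetry, because $B(u,u')=\overline{B(u',u)}$ turns the condition for $(u',u)$ into the conjugate of the condition for $(u,u')$.

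We use the convention (implicit in the definition of $w^*$) that $B$ is $E$-linear in the first variable, and we use $B(W,v)=0$. The three computations are then as follows.
\begin{enumerate}
\item For $y,y'\in W$ we get $B(Ay,y')+B(y,Ay')=0$, which says exactly that $A\in\fu_W$.
\item For $y\in W$ and $u'=v$ we get $B(\lambda(y)v,v)+B(y,w)=\lambda(y)\beta+B(y,w)=0$. Since $\beta\neq 0$, this forces $\lambda(y)=-\beta^{-1}B(y,w)=w^*(y)$.
\item For $u=u'=v$ we get $(e+\overline e)\beta=0$, that is, $\overline e=-e$.
\end{enumerate}

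The only point needing a moment's care is to conclude from (3) that $e=\xi d$ with $d\in F$. For this we check that $\xi$ is a unit in $E$. If $E$ is a field this is clear since $\xi\neq 0$. If $E=F\times F$, conjugation swaps the two coordinates, so $\overline\xi=-\xi$ forces $\xi=(a,-a)$; then $\xi\ne 0$ gives $a\ne 0$, so $\xi$ is invertible. Now the $F$-subspace $\{e\in E:\overline e=-e\}$ is one-dimensional: it is the $(-1)$-eigenspace of the involution on the $2$-dimensional $F$-space $E$, whose fixed space $F$ is one-dimensional. It contains the unit $\xi$, so it equals $\xi F$.

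Conversely, the computations in (1)--(3) show that any triple $(A,w,d)\in\fu_W\times W\times F$ makes $X(A,w,d)$ satisfy the skew-Hermitian condition on all pairs, hence $X(A,w,d)\in\fu_V$. The map $(A,w,d)\mapsto X(A,w,d)$ is $F$-linear, because $w\mapsto w^*$ is $F$-linear (indeed conjugate-linear over $E$). It is injective, since $A$, $w$ and $\xi d$ are read off from the blocks. The arguments above show it is surjective, which gives the isomorphism \eqref{u is product}. As a sanity check, the $F$-dimensions agree: $(n-1)^2+2(n-1)+1=n^2$. There is no genuine obstacle here; the only steps requiring care are the invertibility of $\xi$ in the split case and keeping track of the conjugate-linearity of $B$ in the second variable.
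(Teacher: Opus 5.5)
Your proposal is correct and is exactly the block-by-block verification that the paper leaves to the reader as ``an easy computation'' (the paper gives no written proof). You write $X$ in block form relative to $V=W\oplus Ev$ and impose $B(Xu,u')+B(u,Xu')=0$ on $W\times W$, $W\times Ev$ and $Ev\times Ev$, which forces $A\in\fu_W$, forces the bottom-left block to be $w^*$, and forces $\overline{e}=-e$. Two small remarks. First, showing that $\{e:\overline e=-e\}$ is one-dimensional over $F$ via the eigenspace decomposition uses $\operatorname{char}F\neq 2$, which holds here since $\ell$ is odd. Second, to conclude that this space equals $F\xi$ you only need $\xi\neq 0$, not that $\xi$ is a unit.
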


The next lemma provides a description of the Chevalley map 
 under the identification $\uc \cong \mathbb{A} ^n$ from \S\ref{subsec: chev.map}
in terms of the map 
$\phi_A:W \rightarrow F^{n-1}$ introduced in \Cref{not:phiA}. Recall that
for every $A\in \fu_W$, we defined a map $\phi_A:W \rightarrow F^{n-1}$ by
\[
\phi_A(w)=(B(A^iw,w)/ \xi^{i})_{i=0,\ldots,n-2}.
\]

\begin{lemma}\label{p and phi} Identify $\uc \cong \mathbb{A} ^n$ as in \S\ref{subsec: chev.map} using the invariants $(b_i)_{i=1}^n$. There are algebraic morphisms $f_1:\uu_W \times \mathbb{A} ^1 \rightarrow \mathbb{A} ^{n-1}$ and 
$f_{{2}}:\uu_W \times \mathbb{A} ^1 \rightarrow \GL_{n-1}$ such that

\[
p \left( \begin{pmatrix} A & w \\ w^* & \xi d \end{pmatrix} \right) =\begin{pmatrix} -\frac1{\xi}\trace(A)-d \\ f_1(A,d)+f_2(A,d) \left( \phi_A(w) \right) \end{pmatrix}
\]

Furthermore, $$det(f_2(A,d))=(\xi^{-2}\beta^{-1})^{n-1}.$$
\end{lemma}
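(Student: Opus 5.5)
We would compute the characteristic polynomial of $X=X(A,w,d)$ using the block (Schur complement) formula. For $x$ with $x-A$ invertible,
\[
\det(x-X)=\det(x-A)\,\bigl(x-\xi d - w^*(x-A)^{-1}w\bigr),
\]
where $w^*(y)=-\beta^{-1}B(y,w)$. Expanding $(x-A)^{-1}$ as a Neumann series, or better using the adjugate, we get
\[
\det(x-A)\,w^*(x-A)^{-1}w=-\beta^{-1}\sum_{i} q_i(x,A)\,B(A^iw,w),
\]
where the $q_i(x,A)$ are polynomials in $x$ and in the coefficients of $\chi_A$. Concretely, $\operatorname{adj}(x-A)=\sum_{i=0}^{n-2} x^{n-2-i}\sum_{j\le i} c_{i-j}(A)A^j$, with $c_0=1$. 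Hence
\[
\chi_X(x)=\chi_A(x)(x-\xi d)+\beta^{-1}\sum_{i=0}^{n-2}x^{n-2-i}\sum_{j=0}^{i}c_{i-j}(A)\,B(A^jw,w).
\]
This is a polynomial identity, so it holds for all $x$. It shows that each $c_k(X)$ equals a polynomial in $(A,d)$ (coming from $\chi_A(x)(x-\xi d)$) plus a linear combination of the quantities $B(A^jw,w)=\xi^j\phi_A(w)_j$, $j=0,\dots,n-2$, with coefficients polynomial in $A$ only.

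Next we read off the coordinates. The coefficient of $x^{n-1}$ gives $c_1(X)=c_1(A)-\xi d=-\trace(A)-\xi d$. Dividing by $\xi$ yields $b_1(X)=-\frac1\xi\trace(A)-d$, the first row. For $k=2,\dots,n$, the coefficient of $x^{n-k}$ gives
\[
c_k(X)=\bigl[c_k(A)-\xi d\,c_{k-1}(A)\bigr]+\beta^{-1}\sum_{j=0}^{k-2}c_{k-2-j}(A)\,\xi^j\phi_A(w)_j,
\]
with the convention $c_n(A)=0$. Dividing by $\xi^k$, we set $f_1(A,d)_k=\xi^{-k}(c_k(A)-\xi d\,c_{k-1}(A))$ and $f_2(A,d)_{k,j}=\beta^{-1}\xi^{j-k}c_{k-2-j}(A)$ for $j\le k-2$, and $0$ otherwise.

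We then check that these are $F$-valued algebraic maps. We have $c_i(A)=\xi^i b_i(A)$ with $b_i(A)\in F$ and $\xi^2\in F$. So $\xi^{-k}c_k(A)=b_k(A)$, $\xi^{-k}\xi d\,c_{k-1}(A)=d\,b_{k-1}(A)$, and $\xi^{j-k}c_{k-2-j}(A)=\xi^{-2}b_{k-2-j}(A)$. All of these lie in $F$ and are polynomial in $(A,d)$. In fact $f_2$ depends only on $A$.

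Indexing rows by $k=2,\dots,n$ and columns by $j=0,\dots,n-2$, the matrix $f_2$ is lower triangular, since its entry vanishes when $j>k-2$. Its diagonal entries ($j=k-2$) are all $\beta^{-1}\xi^{-2}c_0=\xi^{-2}\beta^{-1}$. Therefore $\det f_2=(\xi^{-2}\beta^{-1})^{n-1}\neq0$, so $f_2$ lands in $\GL_{n-1}$.

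The only delicate step is getting the adjugate expansion and the signs and normalization of $w^*$ right. In particular, one must confirm that $w^*(y)=-\beta^{-1}B(y,w)$ is the correct entry making $X$ skew-Hermitian, and that the sign conventions in $\chi$ match. A sign error there would only change $\beta^{-1}$ to $-\beta^{-1}$, but the claimed determinant pins down the sign. We would therefore double-check the case $n=2$ directly: $\det(x-X)=(x-a)(x-\xi d)-w^*w$ with $w^*w=-\beta^{-1}B(w,w)$.
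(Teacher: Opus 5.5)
Your proposal is correct and follows the paper's route. You use the Schur-complement identity $\chi_X(x)=\chi_A(x)(x-\xi d)+\beta^{-1}B(\operatorname{adj}(xI-A)w,w)$ and then expand the adjugate, and your $f_1(A,d)_k=b_k(A)-d\,b_{k-1}(A)$ and lower-triangular Toeplitz matrix $f_2=\xi^{-2}\beta^{-1}\bigl(b_{k-2-j}(A)\bigr)$ coincide exactly with the paper's choices; the only difference is that you write the adjugate coefficients in closed form $\sum_{j\le i}c_{i-j}(A)A^j$ and carry out the coefficient comparison explicitly, whereas the paper cites \cite[Lemma 4.4]{AAGKS} for their structure and leaves that verification to the reader.
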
 

\begin{proof} \Nir{Let $X=X(A,w,d)\in \fu$. }Under the identification $\uc \cong \mathbb{A} ^n$, $p$ sends $X$ to the vector $\left( \frac{c_1(X)}{\xi},\frac{c_2(X)}{\xi ^2},\ldots,\frac{c_n(X)}{\xi ^n} \right)$, where $c_1(X),\ldots,c_n(X)$ are the coefficients of the characteristic polynomial of $X$,
\[
\det({\bf x}I-X)={\bf x}^n+\sum c_i(X){\bf x}^{n-i}.
\]

We have $c_1(X)=-\trace(X)=-\trace(A)- \xi d$. 

\Nir{
Schur's formula for the determinant of a block matrix implies that for $P \in M_{r}(F), r,s,q \in F$ we have:

\[
\det \left( \begin{pmatrix} P & q \\ r & s \end{pmatrix} \right) = s \det(P)-r \adj(P)q
\]
we obtain: 

\begin{multline*}  
\det({\bf x}I-X)=({\bf x}-\xi d)\det({\bf x}I-A)-(-w^{*})( \adj({\bf x}I-A)(-w))\\=
\det({\bf x}I-A)({\bf x}-\xi d)-w^{*}( \adj({\bf x}I-A)(w))\\=
\det({\bf x}I-A)({\bf x}-\xi d)+\beta^{-1}B(\adj({\bf x}I-A)w,w).
\end{multline*}

Thus,
\[
\det({\bf x}I-X)=\det({\bf x}I-A)({\bf x}-\xi d)+\beta^{-1}B(\adj({\bf x}I-A)w,w).\]
}
Let the adjugate matrix of ${\bf x}I-A$ be written as
\[
\adj({\bf x}I-A)=\sum_{k=0}^{n-2}C_k(A){\bf x}^k
\]
where $C_0(A),\ldots,C_{n-2}(A)\in \End(W)$. 

Applying the formula for $\det({\bf x}I-X)$ to \(X=X(A,w,d)\) and to
\(X=X(A,0,d)\), and subtracting the resulting identities, we obtain 
\[
\det({\bf x}I-X(A,w,d))-\det({\bf x}I-X(A,0,d))
=
\beta^{-1}B(\operatorname{adj}({\bf x}I-A)w,w).
\]

Comparing the coefficients of $x^{n-a}$ in \Nir{the last formula} we conclude that, for $a \geq 2$, $$c_a(X(A,w,d))-c_a(X(A,0,d))=\beta^{-1}B(C_{n-a}(A)w,w).$$

From \cite[Lemma 4.4]{AAGKS} we know that $C_{n-2}(A)=I$ and if $k<n-2$ then $C_k(A)-A^{n-k-2}$ is a linear combination of $C_{k+1}(A),\ldots,C_{n-2}(A)$. 
We define $f_1(A,d)$ to be the $n-1$ last coordinates of $p(X(A,0,d))$. 
Explicitly, 
$$
f_1(A,d)=
\left( \frac{c_2(X(A,0,d))}{\xi ^2},\ldots,\frac{c_n(X(A,0,d))}{\xi ^n} \right)
$$
We recall that for $A \in \fu_{W}$ 
\[
\det(xI_W-A)
=
x^{n-1}+c_1(A)x^{n-2}+\cdots+c_{n-1}(A),
\qquad
b_j(A):=\frac{c_j(A)}{\xi^j},
\]
and we define

\[
f_2(A,d)
=
\beta^{-1}\xi^{-2}
\begin{pmatrix}
1 & 0 & 0 & \cdots & 0\\
b_1(A) & 1 & 0 & \cdots & 0\\
b_2(A) & b_1(A) & 1 & \cdots & 0\\
\vdots & \vdots & \vdots & \ddots & \vdots\\
b_{n-2}(A) & b_{n-3}(A) & b_{n-4}(A)
& \cdots & 1
\end{pmatrix}.
\]

We checked above that the first coordinate of the Chevalley map is given by the formula stated in the lemma. The verification that the last $n-1$ coordinates of the Chevalley map $p$ satisfy the assertion, namely that
$$
\left( \frac{c_2(X)}{\xi ^2},\ldots,\frac{c_n(X)}{\xi ^n} \right)=
f_1(A,d)+f_2(A,d) \left( \phi_A(w) \right)
$$
is a direct calculation and is left to the reader.

\end{proof}

\begin{proof}[Proof of \Cref{thm:kappa.chv}]

We identify $\fc \cong F^n$ as in \S\ref{subsec: chev.map} and $\fu_{V} \cong \fu_W \oplus W \oplus F$ as in \eqref{u is product}. Since the conclusion of the theorem is unchanged upon multiplying any of
the Haar measures by a nonzero constant, we may and do normalize the measures as follows. Under the identifications
\[
\mathfrak u
\simeq
\mathfrak u_W\oplus W\oplus F
\]
from \eqref{u is product} and
\[
\mathfrak c\simeq F^n,
\]
we assume that
\[
\mu_{\mathfrak u}
=
\mu_{\mathfrak u_W}\otimes\mu_W\otimes\mu_F
\qquad\text{and}\qquad
\mu_{\mathfrak c}=\mu_{F^n}.
\]
We also write
\[
\mu_{W\oplus F}:=\mu_W\otimes\mu_F.
\]

It suffices to prove the theorem for $f=1_{X \times Y \times Z}$ where $X \subseteq \fu_W$, $Y \subseteq W$, and $Z \subseteq F$ are compact open subsets.

Fix $A\in \fu_W$ and let $g_A:W \oplus F \rightarrow F^n$ be the map $g_A(w,d)= p \left( \begin{pmatrix} A & w \\ w^* & \xi d \end{pmatrix} \right)$. By \Cref{p and phi},
\[
g_A(w,d)=\begin{pmatrix} -\frac1{\xi}\trace(A)-d \\ f_1(A,d)+f_2(A,d) \left( \phi_A(w) \right) \end{pmatrix}.
\]

Let $A \in \fu_{W}^{rss}$,
the push forward $(g_A)_*(1_{Y \times Z} \cdot \mu_{W \oplus F})$ is absolutely continuous with respect to $\mu_{F^n}$ \Nir{(e.g. by \Cref{thm:Lr-bound})}. 
We first obtain a formula for the $L^r$ norm of the density function for this measure.
We will show that this norm of the density function
\[
h_A:=
\frac{(g_A)_*(1_{Y\times Z}\mu_{W\oplus F})}
     {\mu_{F^n}}
\]
is expressible in terms of the $L^r$ norm of the density function studied in \Cref{thm:Lr-bound}. Namely, let
\[
q_A:=
\frac{(\phi_A)_*(1_Y\mu_W)}
     {\mu_{F^{n-1}}}.
\]
Then for any $A \in \fu_{W}^{rss}$ we \Nir{claim}:
\begin{equation}\label{eq:h_A}
\|h_A\|_{r}
=
C_r\|q_A\|_{r},
\end{equation}
where
\[
C_r
=
\mu_F(Z)^{1/r}
\left|\xi^{-2}\beta^{-1}\right|^{(n-1)(\frac{1}{r}-1)}
\]
is independent of \(A\).

To show that, 
let $d_A(a)=-a-\frac{1}{\xi}\operatorname{tr}(A)$ and let $M_A(a)=f_2(A,d_A(a)).$ 
If $a\in F$ and $b\in F^{n-1}$, the density of $(g_A)_*(1_{Y \times Z} \cdot \mu_{W \oplus F})$ at $\begin{pmatrix} a \\ b \end{pmatrix} \in F^n$ is

\begin{align*}
h_A\left(
\begin{pmatrix}
a\\
b
\end{pmatrix}
\right)=\frac{(g_A)_*(1_{Y \times Z}\mu_{W \oplus F})}{\mu_{F^n}}
\left(
\begin{pmatrix}
a\\
b
\end{pmatrix}
\right)
&=
|\det(M_A(a))|^{-1}
1_Z(d_A(a))
\nonumber\\
&\cdot
q_A(
M_A(a))^{-1}
(b-f_1(A,d_A(a)))).
\end{align*}
Note that
\[
D:=\left|\det M_A(a)\right|
=\left|\det f_2(A,d_A(a))\right|
  =\left|\xi^{-2}\beta^{-1}\right|^{n-1}.
\]
 is independent of \(A\) and \(a\).

For each fixed \(a\in F\), translation by
\(f_1(A,d_A(a))\) and the change of
variables
\[
b=M_A(a)u
\]
give
\[
\begin{aligned}
\int_{F^{n-1}}|h_A(a,b)|^r\,db
={}&
1_Z\left(-a-\frac{\operatorname{tr}(A)}{\xi}\right)
D^{1-r}
\|q_A\|_{L^r(F^{n-1})}^r.
\end{aligned}
\]
Since the map
\[
a\longmapsto d_A(a)=-a-\frac{\operatorname{tr}(A)}{\xi}
\]
preserves Haar measure on \(F\), it follows that
\[
\|h_A\|_{r}
=
\mu_F(Z)^{1/r}
D^{1/r-1}
\|q_A\|_{L^r}=C_r\|q_A\|_{r},
\]
as claimed. 

We can now show the statement of \Cref{thm:kappa.chv}.
We can assume without loss of generality that $1 \leq r< \infty$.
By \Cref{thm:Lr-bound} applied to the Hermitian space $W$ and the function $\rho=1_{Y} \in C^{\infty}_c(W)$ we obtain that the function
\[
\Phi_r(A):=
\frac{1}{\kappa_W(A)}
\left\|
\frac{(\phi_A)_*(1_Y\mu_W)}
     {\mu_{F^{n-1}}}
\right\|_{r}
\]
belongs to \(L^{<\infty}_{\mathrm{loc}}(\mathfrak u_W)\). \Nir{We have}
\begin{equation}\label{eq:q_A}
\left\|
q_A
\right\|_{r}=
\left\|
\frac{(\phi_A)_*(1_Y\mu_W)}
     {\mu_{F^{n-1}}}
\right\|_{r}
=
\kappa_W(A)\Phi_r(A).
\end{equation}

Now by \href{https://grokipedia.com/page/Tonelli's_theorem}{Tonelli's theorem}, 
\[
p_*(f\mu_{\mathfrak u})
=
\int_{A\in X}
(g_A)_*(1_{Y\times Z}\mu_{W\oplus F})
\,d\mu_{\mathfrak u_W}(A).
\]
\Eitan{
Minkowski's inequality for $L^r(F^n)$ 
implies that:
\[
\left\|
\frac{p_*(f\mu_{\mathfrak u})}{\mu_{\mathfrak c}}
\right\|_{r}
\leq
\int_X ||h_A||_{r}
d\mu_{\mathfrak u_W}(A)
\overset{\eqref{eq:q_A}\eqref{eq:h_A}}{=}
C_r\int_X \kappa_W(A)\Phi_r(A)\,
d\mu_{\mathfrak u_W}(A),
\]
}
Since \(\kappa_W\in L^{>1}_{\mathrm{loc}}(\mathfrak u_W)\)
and \(X\) is compact, there exists \(s>1\) such that
\(\kappa_W\in L^s(X)\). Let \(s'=s/(s-1)\). Since
\(\Phi_r\in L^{<\infty}_{\mathrm{loc}}(\mathfrak u_W)\), we have
\(\Phi_r\in L^{s'}(X)\). Therefore, by H\"older's inequality,
\[
\begin{aligned}
\left\|
\frac{p_*(f\mu_{\mathfrak u})}{\mu_{\mathfrak c}}
\right\|_{r}
\leq
C_r\int_X \kappa_W(A)\Phi_r(A)\,
d\mu_{\mathfrak u_W}(A)
&\leq
C_r
\left(\int_X \kappa_W(A)^s\,
d\mu_{\mathfrak u_W}(A)\right)^{1/s}\\
&\qquad\cdot
\left(\int_X \Phi_r(A)^{s'}\,
d\mu_{\mathfrak u_W}(A)\right)^{1/s'}
<\infty.
\end{aligned}
\]
Since \(1 \leq r<\infty\) was arbitrary, the result follows.

\end{proof}

\section{The group case -- proof of \Cref{thm:main.grp}}\label{sec:grp.case}
\begin{proof}[Idea of the proof]
We use the Cayley transform to cover $\bfU$ by open sets which are isomorphic to open sets in $\uu$ in a way that is compatible with the maps $\cP$ and $p$ (see \Cref{thm:Cayley} below). This allows us to use the existence of a partition of unity and deduce  the result from \Cref{thm:main}.
\end{proof}
\subsection{The categorical quotient}\label{ssec:cat.qu}
Let us describe explicitly the categorical quotient $\bfU//\bfU$.
We set $n:=\dim(V)$.

\begin{notation}\label{not:cP}
    $ $
    \begin{itemize}
        \item Let $\mdef{\mathbf{P}}:=  \mdef{\mathbf{P}_n}$  be the variety of monic polynomials \Dima{of degree $n$}, with coefficients in $\bfE$, considered as a variety defined over $F$.

        \item Let $\mdef{\bfP^{\times}}:=\{f\in \bfP\, \vert \, f(0)\Nir{\text{ is invertible}}\},$ and denote by $\mdef{\theta}:=\mdef{\theta_n}:\mathbf{P^{\times}}\to \mathbf{P^\times}
        $ 
        the involution defined by $$(\theta(f))(x):=x^nf(x^{-1})/f(0).$$
        \item Denote $\mdef{\bfC}:=\mdef{\bfC_n}:=\Dima{\{f\in \bfP^\times\,\vert \, \theta(f)=\overline{f}\}}$.
        \item Let $\mdef{\bfG}=\mdef{\bfG_\bfV}=\Aut_E(\bfV)$ considered as an algebraic group over $F$.
        \item Let $\mdef{\pi_\bfV}:=\mdef{\pi}: \bfG\to \Dima{\bfP^{\times}}$ be the map sending a matrix to its characteristic polynomial.
    \end{itemize}
\end{notation}
The following lemma is standard:
\begin{lemma}\label{lem:cP1}
$\pi(\bfU)=\bfC$.
\end{lemma}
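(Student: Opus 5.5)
We prove the two inclusions $\pi(\bfU)\subseteq \bfC$ and $\bfC\subseteq\pi(\bfU)$ separately. We read the statement as an equality of images on $\overline{F}$-points (equivalently, scheme-theoretic image of $\pi|_{\bfU}$ equals the closed subvariety $\bfC$). Since $\bfC$ is defined over $F$, everything may be checked after base change to $F^{sep}$ (or $\overline F$). After this base change $E\otimes_F F^{sep}\cong F^{sep}\times F^{sep}$, and by \Cref{unitary if F x F} the situation becomes split. There $\bfU=\{(g,(g^{-1})^T)\}\cong \GL_n$, the variety $\bfP$ becomes pairs $(f_1,f_2)$ of monic degree $n$ polynomials, and the conjugation $f\mapsto\overline f$ swaps the two components.

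\emph{The inclusion $\pi(\bfU)\subseteq\bfC$.} For $g\in\bfU$ we have $\pi(g)=(\chi_g,\chi_{(g^{-1})^T})=(\chi_g,\chi_{g^{-1}})$. The standard identity
\[
\chi_{g^{-1}}(x)=x^n\chi_g(x^{-1})/\chi_g(0)
\]
holds up to the sign convention $\chi_g(0)=(-1)^n\det g$, and one checks the signs cancel. It follows that $\theta(\chi_g)=\chi_{g^{-1}}$, and symmetrically $\theta(\chi_{g^{-1}})=\chi_g$. Hence $\theta(\pi(g))=\overline{\pi(g)}$. Invertibility of $\pi(g)(0)$ is clear since $g$ is invertible. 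Alternatively, one can argue intrinsically without splitting: $g^*=g^{-1}$ with respect to $B$ gives $\chi_{g^{-1}}=\chi_{g^*}=\overline{\chi_g}$, and combining this with the inversion identity gives the same conclusion.

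\emph{The inclusion $\bfC\subseteq\pi(\bfU)$.} In split coordinates a point of $\bfC$ is a pair $(f_1,f_2)$ with $f_1(0)$ invertible and $\theta(f_1)=f_2$ (the second condition $\theta(f_2)=f_1$ follows because $\theta$ is an involution). So $\bfC\cong\bfP^\times_n$ over $F^{sep}$, via the first coordinate. Given $f_1$ with nonzero constant term, we take $g$ to be its companion matrix. This $g$ is invertible, and $(g,(g^{-1})^T)\in\bfU$ maps to $(f_1,\theta(f_1))$ by the identity above. This shows surjectivity on geometric points.

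The main subtle point is the interpretation of the statement rather than any computation. If one wants $\pi(\bfU)(F)=\bfC(F)$ on rational points in the nonsplit case, it does not follow from the geometric argument and would require a separate construction. One would try to realize a given $f\in\bfC(F)$ as the characteristic polynomial of an element of the fixed Hermitian space $(V,B)$. For this I would first try a companion-type construction: decompose $f$ into $\theta$-self-conjugate irreducible factors and pairs of factors swapped by $f\mapsto\theta(\overline f)$, build unitary elements on suitable Hermitian spaces for each piece, and then adjust the discriminant of the form. That adjustment is likely the real obstacle, since it may fail for a given $B$. So I would prove the statement in the geometric sense described above, which is what the Cayley-transform argument needs.
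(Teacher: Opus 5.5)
Your proof is correct; the paper gives no argument (it calls the lemma standard), and your reduction to the split case by extension of scalars, using $\theta(\chi_g)=\chi_{g^{-1}}$ and a companion matrix for surjectivity, is the same technique the paper uses for the parallel statements \Cref{lem:qust.lie} and \Cref{lem:BezInv}. Your caution about rational points is justified, because that version is false: if $E$ is a field and $(V,B)$ is an anisotropic Hermitian plane, then $f=(x-\alpha)(x-\overline{\alpha}^{-1})$ with $\alpha\in E^\times$, $\alpha\overline{\alpha}\neq 1$, lies in $\bfC(F)$, yet an $\alpha$-eigenvector $v$ of any $g\in U$ with characteristic polynomial $f$ would satisfy $B(v,v)=\alpha\overline{\alpha}B(v,v)$ and hence be isotropic, so the lemma must be read as a statement about varieties, as you did.
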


\begin{lemma}\label{lem:cP2}
Consider the restriction of $\pi$ to $\bfU$ as a map to $\bfC$. Then this map is the categorical quotient map $\bfU\to \bfU//\bfU$ with respect to the adjoint action. 
\end{lemma}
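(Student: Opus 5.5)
The plan is to reduce to the Lie algebra statement, \Cref{lem:qust.lie}, by base change, and to compare with the classical $\GL_n$ case. First we check that $\pi|_\bfU$ is $\bfU$-invariant and lands in $\bfC$; this is \Cref{lem:cP1}. Hence it induces a morphism
\[
\bar\pi:\bfU//\bfU=\Spec(O(\bfU)^{\bfU})\to \bfC .
\]
We must show that $\bar\pi$ is an isomorphism, i.e.\ that the pullback $O(\bfC)\to O(\bfU)^{\bfU}$ is bijective. Taking invariants of a reductive group in odd characteristic commutes with flat base change; in fact invariants are a kernel, so they commute with any flat extension. It therefore suffices to prove bijectivity after tensoring with $E$ over $F$.

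After base change to $E$ we have $E\otimes_F E\cong E\times E$. By \Cref{unitary if F x F}, $\bfU_E\cong \GL_n$, embedded as $g\mapsto (g,(g^{-1})^T)$. Under this identification the characteristic polynomial over $\bfE_E=\bA^1\times\bA^1$ becomes the pair $(\chi_g,\chi_{(g^{-1})^T})$. Since $\chi_{g^{-1}}=\theta(\chi_g)$, the pair is $(\chi_g,\theta(\chi_g))$.

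Next we identify $\bfC_E$. A point of $\bfP^\times_E$ is a pair $(f_1,f_2)$ of monic degree-$n$ polynomials with invertible constant term. The conjugation $\overline{(\cdot)}$ swaps the two factors, so the condition $\theta(f)=\overline f$ reads $f_2=\theta(f_1)$; the second condition $\theta(f_2)=f_1$ is then automatic because $\theta$ is an involution. Hence $\bfC_E$ is the variety of monic polynomials of degree $n$ with invertible constant term, that is, $\bA^{n-1}\times\mathbb{G}_m$. The map $\bar\pi_E$ then becomes the classical map $\GL_n//\GL_n\to \bA^{n-1}\times\mathbb{G}_m$ sending $g$ to $\chi_g$.

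That this classical map is an isomorphism is standard (see \cite[Theorem 1.3]{DeCo-Proc} or the group version of Chevalley restriction). Concretely, $O(\GL_n)^{\GL_n}=O(\fg\fl_n)^{\GL_n}[\det^{-1}]$, which is the polynomial ring in $c_1,\dots,c_n$ localized at $c_n$. The invariant ring localizes correctly because $\det$ is itself invariant.

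The main obstacle is making sure that $\bfC$ as defined, a closed subscheme of $\bfP^\times$, is reduced and really equals the image, rather than containing extra nilpotent structure. The base-change computation above shows this directly, since $\bfC_E$ is the graph of $\theta$ and hence smooth. Descent from $E$ to $F$ is then routine, because faithfully flat base change detects isomorphisms.
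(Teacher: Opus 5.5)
Your proposal is correct and takes essentially the paper's route. The paper proves this lemma ``analogously to \Cref{lem:qust.lie}'': it base changes to $E$, using $E\otimes_F E\cong E\times E$ and the fact that invariants commute with flat extension, to reduce to the classical $\GL_n$ statement. Your identification of $\bfC_E$ with the graph of $\theta$, and the localization $O(\GL_n)^{\GL_n}=O(\fg\fl_n)^{\GL_n}[\det^{-1}]$, just spell out details the paper leaves implicit.
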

\begin{proof}
    It is analogous to \Cref{lem:qust.lie}. 
\end{proof}
\begin{notation}\label{not:cP2}
    We will identify $\bfU//\bfU$ with $\bfC$ and denote the the categorical quotient map by $\mdef{\cP_V}:=\mdef{\cP}:\bfU\to \bfC$.
\end{notation}
\subsection{The local  structure of $\cP$}

The main fact that allows us to deduce \Cref{thm:main.grp} from \Cref{thm:main} is the following:

\begin{theorem}\label{thm:Cayley}
There exist:
\begin{itemize}
    \item a Zariski open cover \(\mathbf U=\bigcup_{i=1}^N\mathbf X_i\);
    \item an open subset \(\mathbf X\subset \mathfrak u\);
    \item an open subset \(\mathbf T\subset \uc\);
    \item open subsets \(\mathbf C_i\subset \bfC\);
    \item isomorphisms \(\varphi_i:\mathbf X_i\xrightarrow{\sim}\mathbf X\);
    \item isomorphisms
    $
    j_i:\bfT\xrightarrow{\sim}\bfC_i
    $
\end{itemize}
such that  for every \(i\), 
we have 
the following commutative diagram:
\[
\begin{tikzcd}
& \mathbf X_i \arrow[r, "\varphi_i"', "\sim"] \arrow[d]
\arrow[dl, "\cP|_{\mathbf X_i}"'] 
&
\mathbf X \arrow[dr, "p|_{\mathbf X}"]\arrow[d]
&
\\
\bfC
&
\mathbf C_i \arrow[l, hook']
&
\mathbf T \arrow[l, "j_i", "\sim"']\arrow[r, hook]
&
\uc
\end{tikzcd}
\]\end{theorem}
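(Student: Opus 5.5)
We will use the Cayley transform, twisted by suitable central elements of \(\bfU\). For \(\lambda\in E\) with \(\lambda\overline{\lambda}=1\), the scalar \(\lambda I\) lies in the center of \(U\). Consider
\[
c(X):=(I+X)(I-X)^{-1},
\]
defined on the open set \(\{\det(I-X)\in \bfE^\times\}\subset\uu\). A standard computation shows that \(c\) maps this set isomorphically onto \(\{g\in\bfU : \det(I+g)\in\bfE^\times\}\), with inverse \(g\mapsto (g-I)(g+I)^{-1}\). Indeed, if \(X^*=-X\), then \(c(X)^*c(X)=I\), and conversely. Here we use that \(2\) is invertible, since \(\ell\) is odd.

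For central \(\lambda_i\) we set
\[
\mathbf X_i:=\{g\in\bfU:\det(\lambda_i I+g)\in \bfE^\times\},\qquad \varphi_i(g):=c^{-1}(\lambda_i^{-1}g).
\]
Then \(\mathbf X\) is the open subset of \(\uu\) described above. Since \(\varphi_i\) is conjugation-equivariant and given by rational functions in \(g\), the characteristic polynomial of \(\varphi_i(g)\) depends only on that of \(g\). Concretely, we define a rational map on polynomials: if \(f\) is the characteristic polynomial of \(g\), the characteristic polynomial of \(c^{-1}(\lambda_i^{-1}g)\) is obtained from \(f\) by the Möbius substitution \(x\mapsto \lambda_i(1+x)/(1-x)\), multiplied by \((1-x)^n\) and normalized to be monic.

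This defines an isomorphism \(j_i^{-1}:\bfC_i\to\mathbf T\). Here \(\bfC_i=\{f\in\bfC: f(-\lambda_i)\in\bfE^\times\}\), and \(\mathbf T\subset\uc\) is the open set of \(b\)-coordinates of polynomials with \(h(1)\) invertible. The inverse map is given by the inverse Möbius substitution. We then check, using Lemmas \ref{lem:cP1}, \ref{lem:cP2} and \ref{lem:qust.lie}, that these maps respect the defining conditions \(\theta(f)=\overline f\) on the group side and \(b_i\in F\) on the Lie algebra side. Commutativity of the diagram is then immediate, because both composites send \(g\) to the characteristic polynomial of \(\varphi_i(g)\), expressed in the appropriate coordinates. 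Checking that the Möbius transform carries the condition \(\theta(f)=\overline f\) exactly to the condition that the \(b_i\) be \(F\)-rational is a routine calculation.

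\textbf{Covering.} The sets \(\mathbf X_i\) cover \(\bfU\) as soon as, for every geometric point \(g\), some \(-\lambda_i\) is not an eigenvalue of \(g\), meaning its image in each factor of \(E\otimes\overline F\) is nonzero. Since \(g\) has at most \(n\) eigenvalues in each of the two factors, it suffices to choose \(2n+1\) elements \(\lambda_i\) of norm one that are pairwise distinct in both coordinates. Because \(\bfU\) is quasi-compact, a finite cover is enough. This is where we expect the main obstacle.

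When \(E\) is a field, the norm-one group \(\{\lambda\in E:\lambda\overline\lambda=1\}\) is infinite. It contains \((1+\xi t^k)/(1-\xi t^k)\) for all \(k\), so enough distinct elements exist. When \(E=F\times F\), the norm-one elements are \((a,a^{-1})\) with \(a\in F^\times\), and again infinitely many distinct values exist.

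A secondary subtlety is that \(\varphi_i\) uses the same target \(\mathbf X\) for every \(i\). This works because multiplication by \(\lambda_i^{-1}\) carries \(\mathbf X_i\) isomorphically onto \(\mathbf X_1\) with \(\lambda_1=1\), and \(\mathbf T\) is then the image of \(\mathbf X\). Likewise, \(j_i\) is the composite of the untwisted Cayley map on polynomials with the central twist \(f(x)\mapsto \lambda_i^{n}f(\lambda_i^{-1}x)\), which is an automorphism of \(\bfC\) carrying \(\bfC_1\) onto \(\bfC_i\).
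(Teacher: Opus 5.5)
Your proposal is correct and follows essentially the paper's route. It uses the Cayley transform $(I+A)(I-A)^{-1}$ twisted by central norm-one scalars $\lambda_i$, with $\mathbf X_i=\{g:\det(g+\lambda_i I)\ \text{invertible}\}$, $\varphi_i(g)=\operatorname{cay}^{-1}(\lambda_i^{-1}g)$, and $j_i$ the M\"obius substitution on characteristic polynomials composed with $f(x)\mapsto\lambda_i^{n}f(\lambda_i^{-1}x)$. The only differences are minor: you use $2n+1$ twists where the paper needs only $n+1$ (for unitary $g$ the two components of $\det(g+\lambda I)$ in $E\otimes\overline F$ vanish simultaneously), and you call routine the check that the substitution carries the equations of $\mathbf{T}$ onto those of $\mathbf{C}_0$, which the paper writes out.
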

This theorem is rather standard, for completeness we include its proof here. We start with some preparations:

\begin{notation}\label{not:Cayley-polynomial}
$ $
\begin{enumerate}
    \item We identify \(\uc\) with the following subvariety of \(\bfP\):
    \[
    \uc =
    \left\{
    f(x)=x^n+\sum_{i=1}^n a_i x^{n-i}\in \bfP
    \;\middle|\;
    \overline{a_i}=(-1)^i a_i \text{ for all } i
    \right\}.
    \]    
    Under this identification,
    \[
    p(A)=\chi_A .
    \]

    \item Let
    \[
    \mdef{\bfT}:=\{f\in \uc\mid f(1)\Dima{\text{ is invertible}}\}.
    \] 
    \item Let
    \[
    \mdef{\bfX}:=p^{-1}(\bfT)
    =
    \{A\in\uu\mid \det(I-A)\Dima{\text{ is invertible}}\}.
    \]
    \item Let
    \[
    \mdef{\bfX_0}:=
    \{g\in\bfU\mid \det(g+I)\Dima{\text{ is invertible}}\}.
    \]

    \item Define the Cayley transform
    \[
    \mdef{\operatorname{cay}}:\bfX\longrightarrow \bfX_0,
    \qquad
    A\longmapsto (I+A)(I-A)^{-1}.
    \]
    \item Let
    \[
    \mdef{\bfC_0}:=
    \{q\in\bfC\mid q(-1)\Dima{\text{ is invertible}}\}.
    \]

    \item Define
    \[
    \mdef{\operatorname{cay}}_\fc:\bfT\longrightarrow \bfC_0
    \]
    by the following formula. For \(f\in\bfT\), set
    \[
    {\operatorname{cay}_\fc}(f)(x):=
    \frac{(x+1)^n}{f(1)}
    f\left(\frac{x-1}{x+1}\right).
    \]
    This is a monic polynomial of degree \(n\), hence an element of \(\bfP\). It is easy to see that it is also an element in $\bfC_0$.
\end{enumerate}
\end{notation}

\begin{lemma}\label{lem:Cayley-j-properties}
With the notation above, the following hold.
\begin{enumerate}
    \item\label{it:Cayley-cay-isom}
 The map
    \[
    \operatorname{cay}:\bfX\longrightarrow \bfX_0
    \]
    is an isomorphism. Its inverse is
    \[
    g\longmapsto (g-\Dima{I})(g+\Dima{I})^{-1}.
    \]

    \item\label{it:Cayley-cay-fc-isom}
 The map
    \[
    \operatorname{cay}_\fc:\bfT\longrightarrow \bfC_0
    \]
    is an isomorphism. Its inverse is given as follows: for \(q\in\bfC_0\),
    \[
    \operatorname{cay}_\fc^{-1}(q)(t)
    =
    \frac{(-1)^n(1-t)^n}{q(-1)}
    q\left(\frac{1+t}{1-t}\right).
    \]

    \item\label{it:Cayley-compatibility}
 The Cayley transform is compatible with the two quotient maps:
    \[
    \cP\circ \operatorname{cay}
    =
    \operatorname{cay}_\fc\circ p|_{\bfX}.
    \]
\end{enumerate}
\end{lemma}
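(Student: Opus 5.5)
The plan is to verify the three parts of \Cref{lem:Cayley-j-properties} by direct algebraic computation. The first two are checked separately; the third is reduced to a statement about eigenvalues over an algebraic closure.

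\textbf{Part \eqref{it:Cayley-cay-isom}.} First we check that $\operatorname{cay}$ lands in $\bfX_0$.
\begin{itemize}
\item For $A\in\uu$ we have $A^*=-A$, where $^*$ is the adjoint with respect to $B$.
\item Hence $(I+A)^*=I-A$, and $g=(I+A)(I-A)^{-1}$ satisfies $g^*g=I$. This uses that $I+A$ and $(I-A)^{-1}$ commute.
\item We have $g+I=2(I-A)^{-1}$, which is invertible since $\ell$ is odd. So $g\in\bfX_0$.
\end{itemize}
Conversely, for $g\in\bfX_0$ set $A=(g-I)(g+I)^{-1}$.
\begin{itemize}
\item Using $g^*=g^{-1}$ we get $A^*=(g^{-1}+I)^{-1}(g^{-1}-I)=-A$.
\item We have $I-A=2(g+I)^{-1}$, which is invertible, so $A\in\bfX$.
\end{itemize}
Both maps are regular on these open sets. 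A direct substitution shows that they are mutually inverse, since they are Möbius transformations applied to commuting operators.

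\textbf{Part \eqref{it:Cayley-cay-fc-isom}.} The formula for $\operatorname{cay}_\fc^{-1}$ is the substitution inverse to $x\mapsto \frac{x-1}{x+1}$, namely $t\mapsto\frac{1+t}{1-t}$. We verify:
\begin{itemize}
\item It produces a monic polynomial.
\item The normalizing constants match. Evaluating at the relevant point gives $q(-1)$ on one side and $f(1)$ on the other, and they are nonzero exactly on $\bfT$ and $\bfC_0$ respectively.
\item The two compositions are the identity. This amounts to cancelling the factors $(x+1)^n$ and $(1-t)^n$.
\item The inverse maps $\bfC_0$ into $\uc$. That is, the condition $\theta(q)=\overline q$ translates into $\overline{a_i}=(-1)^ia_i$.
\end{itemize}
For the last point it is cleanest to argue via roots. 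On geometric points, $f\in\uc$ means the multiset of roots $\{\lambda\}$ is stable under $\lambda\mapsto-\overline\lambda$. Likewise $q\in\bfC$ means the roots $\{\mu\}$ are stable under $\mu\mapsto\overline\mu^{-1}$. The Möbius map $\lambda\mapsto\frac{1+\lambda}{1-\lambda}$ intertwines these two involutions. Since the conditions are closed and the algebra is reduced, checking on geometric points suffices.

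\textbf{Part \eqref{it:Cayley-compatibility}.} Both sides are morphisms $\bfX\to\bfC$ between reduced varieties. It therefore suffices to check equality after base change to an algebraic closure, and on a dense set such as the regular semisimple locus. There $A$ is diagonalizable with eigenvalues $\lambda_j$ (after identifying $E\otimes\overline F$ as in \Cref{unitary if F x F}). Then $\operatorname{cay}(A)$ has eigenvalues $\frac{1+\lambda_j}{1-\lambda_j}$, so
\[
\cP(\operatorname{cay}(A))=\prod_j\Bigl(x-\tfrac{1+\lambda_j}{1-\lambda_j}\Bigr).
\]
Now compare with $\operatorname{cay}_\fc(\chi_A)$. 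The factor for $\lambda_j$ is $(x+1)\bigl(\frac{x-1}{x+1}-\lambda_j\bigr)=(1-\lambda_j)x-(1+\lambda_j)$, and dividing by $\chi_A(1)=\prod(1-\lambda_j)$ gives exactly the product above.

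The main subtlety is the bookkeeping in the split case $E=F\times F$. There $\bfP$ consists of pairs of polynomials, and we must check that the conjugation and inversion conventions in the definitions of $\bfC$ and $\uc$ match the Möbius intertwining. I would handle this by working componentwise via \Cref{unitary if F x F}.
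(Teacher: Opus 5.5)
Your Part (1) is the paper's argument essentially verbatim. For Parts (2) and (3) you take a genuinely different route, and it is correct.

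\textbf{Your route.} You pass to $\overline F$ and read the conditions defining $\uc$ and $\bfC$ as conditions on roots. You use that $m(\lambda)=\frac{1+\lambda}{1-\lambda}$ satisfies $m(-\lambda)=m(\lambda)^{-1}$ and carries eigenvalues of $A$ to eigenvalues of $\operatorname{cay}(A)$. You then descend via reducedness and density.

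\textbf{The paper's route.} The paper never leaves $E$. For (2) it checks $\overline q=\theta(q)$ by rewriting both $\overline{q(x)}$ and $x^nq(x^{-1})/q(0)$ as $\frac{(x+1)^n}{f(-1)}f\bigl(\frac{1-x}{1+x}\bigr)$, using $\overline{f(t)}=(-1)^nf(-t)$; it argues analogously for the inverse. For (3) it uses the identity $\det\bigl(xI-(I+A)(I-A)^{-1}\bigr)=\det\bigl((x-1)I-(x+1)A\bigr)/\det(I-A)$, which holds for every $A\in\bfX$. These identities are functorial over any base. So the paper needs no density argument, no diagonalization, no reducedness of $\bfC_0$, and no componentwise bookkeeping.

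\textbf{What your route still owes.} It explains \emph{why} the formulas work, but it carries obligations you only sketch.
\begin{enumerate}
\item On geometric points every case is split, so the componentwise bookkeeping is needed throughout, not only when $E=F\times F$.
\item The root conditions must be stated componentwise. Writing $f=(f_1,f_2)$ and $q=(q_1,q_2)$, the roots of $f_2$ are the negatives of those of $f_1$, and the roots of $q_2$ are the inverses of those of $q_1$. Stability of a single multiset under $\lambda\mapsto-\overline\lambda$ is not literally meaningful for polynomials over $\overline F\times\overline F$.
\item You need $\bfC_0$ to be reduced. This is true: over $\overline F$ the condition $\theta(q)=\overline q$ says $q_2=\theta(q_1)$, so $\bfC$ becomes an open subset of affine space.
\item ``Nonzero'' must be read as ``invertible in $E$''.
\item You should record that $q(0)=f(-1)/f(1)$ is invertible, which is needed for $q\in\bfP^\times$. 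Equivalently, $-1$ is not a root of $f_1$, because $f_2(1)=(-1)^nf_1(-1)\neq0$.
\end{enumerate}

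Finally, density in (3) is unnecessary even on your route: triangularizing over $\overline F$ gives the eigenvalue computation for every $A\in\bfX$.
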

\begin{proof}
We prove the three assertions separately.

\begin{enumerate}
    \item Write \(A^*\) for the adjoint of \(A\) with respect to \(B\).
    Thus \(A\in\uu\) means \(A^*=-A\). If $A\in \bfX(F)$ and 
    \[
    g:=\operatorname{cay}(A)=(\Dima{I}+A)(\Dima{I}-A)^{-1},
    \]
    then
    \[
    g^*
    =
    \bigl((\Dima{I}-A)^{-1}\bigr)^*(\Dima{I}+A)^*
    =
    (\Dima{I}+A)^{-1}(\Dima{I}-A).
    \]
    Hence
    \[
    g^*g
    =
    (\Dima{I}+A)^{-1}(\Dima{I}-A)(\Dima{I}+A)(\Dima{I}-A)^{-1}
    =
    \Dima{I},
    \]
    so \(g\in\bfU\). Moreover,
    \[
    g+\Dima{I}
    =
    (\Dima{I}+A)(\Dima{I}-A)^{-1}+\Dima{I}
    =
    2(\Dima{I}-A)^{-1},
    \]
    which is invertible since \(\operatorname{char}F\neq 2\). Thus
    \(g\in\bfX_0\).

    Conversely, let \(g\in\bfX_0\), and set
    \[
    A:=(g-\Dima{I})(g+\Dima{I})^{-1}.
    \]
    Since \(g\in\bfU\), we have \(g^*=g^{-1}\). Therefore
    \[
    A^*
    =
    (g^{-1}+\Dima{I})^{-1}(g^{-1}-\Dima{I})
    =
    (g+\Dima{I})^{-1}(\Dima{I}-g)
    =
    -A.
    \]
    Hence \(A\in\uu\). Also,
    \[
    \Dima{I}-A
    =
    \Dima{I}-(g-\Dima{I})(g+\Dima{I})^{-1}
    =
    2(g+\Dima{I})^{-1},
    \]
    so \(A\in\bfX\). Direct substitution gives
    \[
    \operatorname{cay}\bigl((g-\Dima{I})(g+\Dima{I})^{-1}\bigr)=g
    \]
    and
    \[
    \bigl(\operatorname{cay}(A)-\Dima{I}\bigr)
    \bigl(\operatorname{cay}(A)+\Dima{I}\bigr)^{-1}=A.
    \]
    Thus \(\operatorname{cay}:\bfX\to\bfX_0\) is an isomorphism with inverse
    \[
    g\longmapsto (g-\Dima{I})(g+\Dima{I})^{-1}.
    \]

    \item We first check that \(\operatorname{cay}_\fc\) maps \(\bfT\) to
    \(\bfC_0\). Let \(f\in\bfT\). Since \(f\in\uc\), we have
    \[
    \overline{f(t)}=(-1)^n f(-t).
    \]
    Put
    \[
    q(x):=\operatorname{cay}_\fc(f)(x)
    =
    \frac{(x+1)^n}{f(1)}
    f\left(\frac{x-1}{x+1}\right).
    \]
    Then \(q\) is monic of degree \(n\), and
    \[
    q(-1)=\frac{(-2)^n}{f(1)}
    \]
    \Dima{is invertible.} Also
    \[
    q(0)=\frac{f(-1)}{f(1)}
    \]
    \Dima{is invertible,} because \(f(1)\) \Dima{is invertible,} and
    \[
    \overline{f(-1)}=(-1)^n f(1).
    \]
    Now
    \[
    \begin{aligned}
    \overline{q(x)}
    &=
    \frac{(x+1)^n}{\overline{f(1)}}
    \overline{
    f\left(\frac{x-1}{x+1}\right)
    }                                                    \\
    &=
    \frac{(x+1)^n}{f(-1)}
    f\left(\frac{1-x}{1+x}\right).
    \end{aligned}
    \]
    On the other hand,
    \[
    \begin{aligned}
    \frac{x^nq(x^{-1})}{q(0)}
    &=
    \frac{x^n}{q(0)}
    \frac{(x^{-1}+1)^n}{f(1)}
    f\left(\frac{x^{-1}-1}{x^{-1}+1}\right)        \\
    &=
    \frac{(x+1)^n}{f(-1)}
    f\left(\frac{1-x}{1+x}\right).
    \end{aligned}
    \]
    Hence
    \[
    \overline q=\theta(q),
    \]
    so \(q\in\bfC\). Since \(q(-1)\) \Dima{is invertible}, we get \(q\in\bfC_0\).

    Now define, for \(q\in\bfC_0\),
    \[
    I(q)(t):=
    \frac{(-1)^n(1-t)^n}{q(-1)}
    q\left(\frac{1+t}{1-t}\right).
    \]
    
    The right-hand side is a polynomial of degree \(n\). Its leading
    coefficient is \(1\), because the coefficient of \(t^n\) in
    \[
    (1-t)^n q\left(\frac{1+t}{1-t}\right)
    \]
    is \((-1)^nq(-1)\). Also,
    \[
    I(q)(1)=\frac{(-1)^n2^n}{q(-1)} 
    \]
    \Dima{is invertible.}
    Since \(q\in\bfC\), we have \(\overline q=\theta(q)\), i.e.
    \[
    \overline{q(x)}=\frac{x^nq(x^{-1})}{q(0)}.
    \]
    Using this identity, one obtains
    \[
    \overline{I(q)(t)}
    =
    (-1)^n I(q)(-t).
    \]
    Hence \(I(q)\in\uc\), and since \(I(q)(1)\) \Dima{is invertible}, we have
    \(I(q)\in\bfT\).

    Finally, $I$ is inverse to $\operatorname{cay}_{\fc}$. This follows by direct substitution from the fact that the two
    fractional linear transformations
    \[
    t\longmapsto \frac{1+t}{1-t},
    \qquad
    x\longmapsto \frac{x-1}{x+1}
    \]
    are inverse. 

    \item Let \(A\in\bfX\). Since \(p(A)=\chi_A\), it is enough to compute
    the characteristic polynomial of \(\operatorname{cay}(A)\). We have
    \[
    \begin{aligned}
    \chi_{\operatorname{cay}(A)}(x)
    &=
    \det\left(x\Dima{I}-\operatorname{cay}(A)\right)                  \\
    &=
    \det\left(x\Dima{I}-(\Dima{I}+A)(\Dima{I}-A)^{-1}\right)                         \\
    &=
    \frac{
    \det\bigl(x(\Dima{I}-A)-(\Dima{I}+A)\bigr)
    }{
    \det(\Dima{I}-A)
    }                                                         \\
    &=
    \frac{
    \det\bigl((x-1)\Dima{I}-(x+1)A\bigr)
    }{
    \chi_A(1)
    }                                                         \\
    &=
    \frac{(x+1)^n}{\chi_A(1)}
    \chi_A\left(\frac{x-1}{x+1}\right).
    \end{aligned}
    \]
    By the definition of \(\operatorname{cay}_\fc\), this is precisely
    \[
    \operatorname{cay}_\fc(\chi_A)(x).
    \]
    Since \(p(A)=\chi_A\) and \(\cP(g)=\chi_g\), we get
    \[
    \cP(\operatorname{cay}(A))
    =
    \operatorname{cay}_\fc(p(A)), \text{ thus }\cP\circ \operatorname{cay}
    =
    \operatorname{cay}_\fc\circ p|_{\bfX}.
    \]
\end{enumerate}
\end{proof}

\Dima{
\begin{lemma}\label{lem:BezInv}
    Let $f\in C$. Let $x_1,\dots,x_{n+1}\in E^1$ be pairwise distinct elements. Then for some $1\leq i\leq n+1$, $f(x_i)$ is invertible.
\end{lemma}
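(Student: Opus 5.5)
The plan is to argue by contradiction, using that a nonzero polynomial over a field cannot have more roots than its degree; the only complication is that $E$ may be split. Here $E^1$ denotes the norm-one elements $\{x\in E\mid x\overline{x}=1\}$, and $f\in C$ is a monic polynomial of degree $n$ with coefficients in $E$ and $f(0)$ invertible. Suppose that $f(x_i)$ is non-invertible for every $1\le i\le n+1$.

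\emph{Case 1: $E$ is a field.} Non-invertible means $f(x_i)=0$. Then the monic degree-$n$ polynomial $f$ has $n+1$ distinct roots in the field $E$, which is impossible.

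\emph{Case 2: $E=F\times F$.} Write $f=(f_1,f_2)$, where $f_1,f_2\in F[x]$ are monic of degree $n$, and write $x_i=(s_i,t_i)$. The condition $x_i\in E^1$ means $x_i\overline{x_i}=(s_it_i,t_is_i)=1$, so $t_i=s_i^{-1}$. In particular, distinct $x_i$ have distinct first coordinates $s_i$. The element $f(x_i)=(f_1(s_i),f_2(s_i^{-1}))$ fails to be invertible exactly when $f_1(s_i)=0$ or $f_2(s_i^{-1})=0$.

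The key input is the defining relation $\theta(f)=\overline{f}$ of $\bfC$. Writing it out coordinatewise gives
\[
f_2(x)=\frac{x^nf_1(x^{-1})}{f_1(0)},
\]
where $f_1(0)\neq 0$ because $f(0)$ is invertible. Hence for $s\neq 0$ we have $f_2(s^{-1})=s^{-n}f_1(s)/f_1(0)$, so $f_2(s^{-1})=0$ if and only if $f_1(s)=0$. Consequently $f(x_i)$ is non-invertible if and only if $f_1(s_i)=0$. Our assumption then gives $n+1$ distinct roots $s_1,\dots,s_{n+1}$ of the degree-$n$ polynomial $f_1$, which is again a contradiction.

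The only step requiring care is the coordinate computation of $\theta(f)=\overline{f}$ in the split case, specifically checking that the conjugation swaps the two coordinates in a way compatible with $\theta$. The remainder of the argument is the root count.
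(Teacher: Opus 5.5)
Your proof is correct. Your split case is presumably what the paper calls ``straightforward''. Reading $\theta(f)=\overline{f}$ coordinatewise as $f_2(x)=x^nf_1(x^{-1})/f_1(0)$ shows that the two ways $f(x_i)=(f_1(s_i),f_2(s_i^{-1}))$ can fail to be invertible coincide. This input is genuinely needed: without it, $f_1$ and $f_2$ could divide the $n+1$ vanishing conditions between them.

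The difference is in the non-split case. The paper does not treat it directly. It extends scalars (e.g.\ to $E$, using $E\otimes_F E\cong E\times E$ as in \Cref{lem:qust.lie}) to reduce to the split case. This tacitly uses that $f\in C$, $x_i\in E^1$ and pairwise distinctness survive base change, and that invertibility of an element of $E$ can be detected after base change (e.g.\ via the norm $a\overline{a}\in F$). You instead observe that when $E$ is a field the claim is just the root bound for a monic polynomial of degree $n$, and uses neither $f\in C$ nor $x_i\in E^1$.

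The paper's route is uniform and puts all the content into one computation. Yours is more elementary, avoids the descent step, and makes clear that the hypotheses of the lemma matter only in the split case.
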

\begin{proof}
    If $E$ splits then this is straightforward. This implies the general case by extension of scalars.
\end{proof}

}

\begin{proof}[Proof of \Cref{thm:Cayley}]
$ $
\begin{enumerate}[Step 1:]
    \item Construction of the data\\
Let
\[
E^1:=\{\lambda\in E^\times\mid \lambda\overline{\lambda}=1\}.
\]
This set is clearly infinite. 
Choose pairwise distinct
\[
\lambda_1,\ldots,\lambda_{n+1}\in E^1 .
\]

For \(\lambda\in E^1\), let
\[
m_\lambda:\bfU\longrightarrow \bfU,
\qquad
g\longmapsto \lambda g .
\]
This is multiplication by a central element of \(\bfU\). It induces
\[
\overline m_\lambda:\bfC\longrightarrow \bfC,
\qquad
q(x)\longmapsto \lambda^n q(\lambda^{-1}x).
\]
Set
\[
\bfX_\lambda:=m_\lambda(\bfX_0)
=
\{g\in\bfU\mid \det(g+\lambda\Dima{I})\text{ \Dima{is invertible}}\}
\]
 and
\[
\bfC_\lambda:=\overline m_\lambda(\bfC_0).
\]

We define
\[
N:=n+1,
\qquad
\bfX_i:=\bfX_{\lambda_i}, \qquad \bfC_i:= \bfC_{\lambda_i},
\]

\[
\varphi_i:\bfX_i\xrightarrow{\sim}\bfX,
\qquad
\varphi_i(g):=\operatorname{cay}^{-1}(\lambda_i^{-1}g), 
\]

and
\[
j_i
:=
\overline m_{\lambda_i}\circ \operatorname{cay}_\fc|_{\bfT} .
\]

\item Verification of the conditions.\\
The maps $\phi_i$ are  isomorphisms by Lemma~\ref{lem:Cayley-j-properties}
(\ref{it:Cayley-cay-isom}), because \(m_{\lambda_i}^{-1}\) identifies
\(\bfX_i\) with \(\bfX_0\).
Similarly $j_i$ are isomorphisms by Lemma~\ref{lem:Cayley-j-properties}(\ref{it:Cayley-cay-fc-isom}) 

\Dima{By \Cref{lem:BezInv} \(\bfX_i\) cover \(\bfU\).}
\Dima{Let} \(g\in\bfX_i\), and put
\[
A:=\varphi_i(g).
\]
Then
\[
g=\lambda_i\operatorname{cay}(A).
\]
Therefore,
\[
\begin{aligned}
\cP(g)
&=
\cP\bigl(m_{\lambda_i}(\operatorname{cay}(A))\bigr)  
=
\overline m_{\lambda_i}\bigl(\cP(\operatorname{cay}(A))\bigr) \\
&=
\overline m_{\lambda_i}
\bigl(\operatorname{cay}_\fc(p(A))\bigr)
\qquad
\text{by Lemma~\ref{lem:Cayley-j-properties}
(\ref{it:Cayley-compatibility})} \\
&=
j_i
\bigl(p(\varphi_i(g))\bigr).
\end{aligned}
\]
This implies the required  commutativity of the diagram.
\end{enumerate}

\end{proof}

\subsection{Proof of \Cref{thm:main.grp}}\label{Subsec-proof-of-thmB}
To streamline the proof of \Cref{thm:main.grp} we introduce some terminology.

\begin{definition}\label{def:FRS}
$ $
\begin{itemize}
    \item {For an open set $\Omega\subset F^d$, a  \tdef{smooth measure} on $\Omega$ is a measure that can be written as $f\mu$ where $\mu$ is the Haar measure on $F^d$ and $f\in C^\infty(\Omega)$.} For an $F$-analytic manifold $X$, a smooth measure on $X$ is a measure that 
{is smooth on every local chart.}
\item For an $F$-analytic manifold $X$ we denote by $\mdef{\cM^\infty(X)}$ the space of smooth measures on $X$ and by  $\mdef{\cM_c^\infty(X)}$  the space of smooth compactly supported measures on $X$.  
\item By an $\mdef{L_{{loc}}^q}$ function 
on an $F$-analytic manifold we mean function which is $L_{{loc}}^q$ with respect to ({every}) smooth measure with full support. 
\item These \Eitan{allow} us to use the notations $\mdef{L_{loc}^{<q}}, \mdef{L_{loc}^{>q}}$ in this generality.

\item
    Let $\phi:\bfX\to \bfY$ be a  morphism of smooth algebraic varieties. Let $X:=\bfX(F)$ and $Y:=\bfY(F)$.
        We say that $\phi$ is \tdef[almost analytically FRS, almost an-FRS]{almost analytically FRS} (in short almost an-FRS) if 
        $$\phi_*(\cM_c^\infty(X)) \subset L^{<\infty}_{loc}(Y) \cM^\infty(Y).$$
\end{itemize}
\end{definition}

\Nir{The following lemma follows from a partition of unity argument:} 

\begin{lemma}\label{lem:almost-anFRS-source-local}
Let $\phi:\bfX\to\bfY$ be a morphism of smooth algebraic varieties, and let
$$
\bfX=\bigcup_{i=1}^N \bfX_i
$$
be a finite Zariski open cover. Then 
TFAE:
\begin{itemize}
    \item  $\phi$ is almost an-FRS
    \item  For each $i$ the map $\phi|_{\bfX_i}$
 is almost an-FRS
\end{itemize}

\end{lemma}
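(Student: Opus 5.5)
The plan is to prove both implications directly from \Cref{def:FRS}, using a partition of unity subordinate to the open cover $X=\bigcup_i X_i$ of $F$-points. Here $X_i:=\bfX_i(F)$ is open in $X$, and the $X_i$ cover $X$ because the $\bfX_i$ are a Zariski open cover.

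\emph{First implication.} Assume $\phi$ is almost an-FRS and fix $i$. Let $\nu\in\cM_c^\infty(X_i)$. Since $X_i$ is open in $X$ and $\nu$ has compact support inside $X_i$, extension by zero gives a smooth compactly supported measure on $X$. Its pushforward under $\phi$ is the same as $(\phi|_{\bfX_i})_*\nu$. By hypothesis this pushforward lies in $L^{<\infty}_{loc}(Y)\cM^\infty(Y)$, so $\phi|_{\bfX_i}$ is almost an-FRS.

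\emph{Second implication.} Assume each $\phi|_{\bfX_i}$ is almost an-FRS, and let $\nu\in\cM_c^\infty(X)$ with compact support $K$. We write $\nu=\nu_{\mathrm{loc}}\mu_{\mathrm{loc}}$ locally, where $\nu_{\mathrm{loc}}$ is locally constant, as in \Cref{def:FRS}. The splitting proceeds in three steps.
\begin{enumerate}
\item Since $X$ is an $l$-space (totally disconnected and locally compact), $K$ can be covered by finitely many compact open sets, each contained in some $X_i$.
\item Disjointify these sets: pass to finitely many pairwise disjoint compact open sets $K_1,\dots,K_m$ covering $K$, with each $K_j\subset X_{i(j)}$.
\item Set $\nu_j:=1_{K_j}\nu$. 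Since $1_{K_j}$ is locally constant, $\nu_j$ is again a smooth measure, and it is compactly supported inside $X_{i(j)}$.
\end{enumerate}
Then $\nu=\sum_j\nu_j$, so $\phi_*\nu=\sum_j(\phi|_{\bfX_{i(j)}})_*\nu_j$. Each summand lies in $L^{<\infty}_{loc}(Y)\cM^\infty(Y)$ by assumption.

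It remains to check that $L^{<\infty}_{loc}(Y)\cM^\infty(Y)$ is closed under finite sums. Fix a smooth measure $\mu_Y$ of full support; locally one can take the Haar measure in a chart. Any element of $\cM^\infty(Y)$ is $g\mu_Y$ with $g$ locally constant, hence locally bounded. Therefore a measure of the form $h\cdot m$ with $h\in L^{<\infty}_{loc}$ and $m\in\cM^\infty(Y)$ equals $(hg)\mu_Y$ with $hg\in L^{<\infty}_{loc}(Y)$. Conversely, anything of the form $h'\mu_Y$ with $h'\in L^{<\infty}_{loc}(Y)$ lies in the class. So the class is exactly $\{h\mu_Y : h\in L^{<\infty}_{loc}(Y)\}$, which is a vector space because $L^r_{loc}$ is one for every $r$. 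This proves the second implication.

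The only point needing care is this last identification: the notion $L_{loc}^q$ must be independent of the choice of full-support smooth measure. That holds because two such measures differ locally by a locally constant nowhere-vanishing density, which is locally bounded above and below. Everything else is routine bookkeeping with compact open sets.
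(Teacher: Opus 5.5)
Your proof is correct and is the partition-of-unity argument the paper invokes without giving details. One direction is extension by zero. For the other, you split $\nu$ into the pieces $1_{K_j}\nu$, each supported in a single $X_i$, and use that $L^{<\infty}_{loc}(Y)\cM^\infty(Y)=\{h\mu_Y : h\in L^{<\infty}_{loc}(Y)\}$ is closed under finite sums.
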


\Nir{The next lemma is straightforward:}

\begin{lemma}\label{lem:almost-anFRS-open-target-restriction}
Let $\phi:\bfX\to\bfY$ be a morphism of smooth algebraic
varieties and let $\psi:\bfY\to \bfY_1$  be an open embedding. Then TFAE:

\begin{itemize}
    \item $\phi$ is almost an-FRS. 
    \item $\psi\circ \phi$ is almost an-FRS. 
\end{itemize}
\end{lemma}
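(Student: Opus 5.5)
The plan is to work directly from \Cref{def:FRS}: almost an-FRS means that $\phi_*\mu$ has the form $g\nu$ for every $\mu\in\cM_c^\infty(X)$, where $g\in L^{<\infty}_{loc}(Y)$ and $\nu\in\cM^\infty(Y)$. Since $\psi:\bfY\to\bfY_1$ is an open embedding, $Y$ is identified with an open subset $\psi(Y)\subset Y_1$, and $(\psi\circ\phi)_*\mu=\psi_*(\phi_*\mu)$. This pushforward is supported in the closed set $\psi(\mathrm{supp}\,\phi_*\mu)$. Because $\phi_*\mu$ is compactly supported, that set is a compact subset of $\psi(Y)$.

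For the direction from $\phi$ to $\psi\circ\phi$, we write $\phi_*\mu=g\nu$. We may first replace $g$ by $g\cdot 1_K$, where $K\subset Y$ is a compact open neighbourhood of $\mathrm{supp}\,\phi_*\mu$; such $K$ exists because $Y$ is an $l$-space. We then fix a smooth measure $\nu_1$ with full support on $Y_1$. The restriction of $\nu_1$ to $\psi(Y)$ is smooth, so on the compact open set $K$ we have $\nu=h\nu_1$ with $h$ locally constant, hence bounded on $K$. Therefore $(\psi\circ\phi)_*\mu=(g1_K h)\,\nu_1$, where $g1_Kh$ is extended by zero outside $\psi(K)$. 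The set $\psi(K)$ is compact open in $Y_1$, and $g1_Kh$ is $L^r$ on it for every $r<\infty$. Extension by zero therefore gives a function in $L^{<\infty}_{loc}(Y_1)$.

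For the converse, we write $(\psi\circ\phi)_*\mu=g_1\nu_1$ and restrict to the open set $\psi(Y)$. The restriction of an $L^{<\infty}_{loc}$ function to an open subset remains $L^{<\infty}_{loc}$, since local integrability is tested on compact subsets. The restriction of $\nu_1$ is a smooth measure on $Y$. Since $\psi$ is injective, pulling back gives $\phi_*\mu=(g_1\circ\psi)\,\psi^*\nu_1$, which is of the required form.

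The main point to check carefully is independence of the choice of smooth reference measure. This is where I would expect the only real obstacle, but it is handled by the observation that the ratio of two smooth measures is locally constant, hence locally bounded. Any $L^{<\infty}_{loc}$ density therefore stays in that class after changing the reference measure.
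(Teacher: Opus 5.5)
Your argument is correct. The paper gives no proof beyond calling the lemma straightforward, and your direct unpacking of the definition is exactly the routine verification intended: a compact open neighbourhood of the support, a locally constant (hence bounded on compacts) ratio to a full-support smooth reference measure, extension by zero in one direction, and restriction plus pullback along $Y\cong\psi(Y)$ in the other.
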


\begin{proof}[Proof of \Cref{thm:main.grp}]
We need to show that $\cP$ is almost an-FRS. By \Cref{thm:Cayley}, there are finitely many open subsets
\(\bfX_i\subset \bfU\), an open subset \(\bfX\subset \uu\), isomorphisms
\[
\varphi_i:\bfX_i\xrightarrow{\sim}\bfX,
\]
and isomorphisms
\[
j_i:\bfT\xrightarrow{\sim}\bfC_i\subset \bfC
\]
such that  for every \(i\), we have the following commutative diagram:
\[
\begin{tikzcd}
& \mathbf X_i \arrow[r, "\varphi_i"', "\sim"] \arrow[d, "\cP_{\mathbf X_i}"]
\arrow[dl, "\cP|_{\mathbf X_i}"'] 
&
\mathbf X \arrow[dr, "p|_{\mathbf X}"]\arrow[d,"p_\bfX"]
&
\\
\bfC
&
\mathbf C_i \arrow[l, hook']
&
\mathbf T \arrow[l, "j_i", "\sim"']\arrow[r, hook]
&
\uc
\end{tikzcd}
\]
By \Cref{lem:almost-anFRS-source-local} it is enough to show that 
$\cP|_{\bfX_i}$  is almost an-FRS.

For this, by \Cref{lem:almost-anFRS-open-target-restriction}, it is enough to show that 
$p_{\bfX_i}$  is almost an-FRS. Thus it is  enough to show that $j_i^{-1}\circ \cP_{\bfX_i}=p_{\bfX} \circ \varphi_i$  is almost an-FRS.  For this, it is enough to show that 
$p_{\bfX}$ is almost an-FRS. By \Cref{lem:almost-anFRS-open-target-restriction} it is enough to show that  $p|_{\bfX}$ is almost an-FRS. This follows from \Cref{thm:main} using \Cref{lem:almost-anFRS-source-local}.
\end{proof}

\begingroup
  \let\clearpage\relax
  \let\cleardoublepage\relax 
  \printindex
\endgroup

\bibliographystyle{alphaurl}
\bibliography{Ramibib}

\end{document}